\DeclarePairedDelimiter\abs{\lvert}{\rvert}
\newtheorem{theorem}{Theorem}[section]
\newtheorem{corollary}[theorem]{Corollary}
\newtheorem{remark}[theorem]{Remark}
\newtheorem{proposition}[theorem]{Proposition}
\newtheorem{lemma}[theorem]{Lemma}
\newtheorem{definition}[theorem]{Definition}
\newtheorem{example}[theorem]{Example}
\newtheorem{question}[theorem]{Question}
\begin{document}

\title[Two-variable polynomial invariants of virtual knots]{Two-variable polynomial invariants of virtual knots  arising from flat virtual \\ knot invariants}

\author{Kirandeep KAUR}
\address{Department of Mathematics, Indian Institute of Technology Ropar, India}   
\email{kirandeep.kaur@iitrpr.ac.in} 

\author{Madeti PRABHAKAR}
\address{Department of Mathematics, Indian Institute of Technology Ropar, India}   
\email{prabhakar@iitrpr.ac.in}

\author{Andrei VESNIN}
\address{Tomsk State University, Tomsk, 634050, Russia \\ and Sobolev Institute of Mathematics, Novosibirsk, 630090, Russia} 
\email{vesnin@math.nsc.ru} 

\thanks{
The first and second named authors were supported by DST -- RSF Project INT/RUS/RSF/P-2. The third named author was supported by the Russian Science Foundation (grant no. 16-41-02006).
}

\subjclass[2010]{Primary 57M27; Secondary 57M25}

\keywords{Virtual knot, affine index polynomial, cosmetic crossing change}

\begin{abstract}
We introduce two sequences of two-variable polynomials $\{ L^n_K (t, \ell)\}_{n=1}^{\infty}$ and $\{ F^n_K (t, \ell)\}_{n=1}^{\infty}$,  expressed in terms of index value of a crossing and $n$-dwrithe value of a virtual knot $K$, where $t$ and $\ell$ are variables. Basing on the fact that $n$-dwrithe is a flat virtual knot invariant we prove that $L^n_K$ and $F^n_K$ are virtual knot invariants containing Kauffman affine index polynomial as a particular case.  Using $L^n_K$ we give sufficient conditions when virtual knot does not admit cosmetic crossing change. 
\end{abstract}

\maketitle 


\section{Introduction}

Virtual knots were  introduced by L.~Kauffman \cite{kauffman1999virtual} as a generalization of classical knots and presented by virtual knot diagrams having classical crossings as well as virtual crossings. Equivalence  between two virtual knot diagrams can be determined through classical Reidemeister moves and virtual Reidemeister moves shown in Fig.~\ref{fig1a} and Fig.~\ref{fig1b}, respectively. 
\begin{figure}[!ht] 
\centering
\subfigure[Classical Reidemeister moves.]
{\includegraphics[scale=0.44]{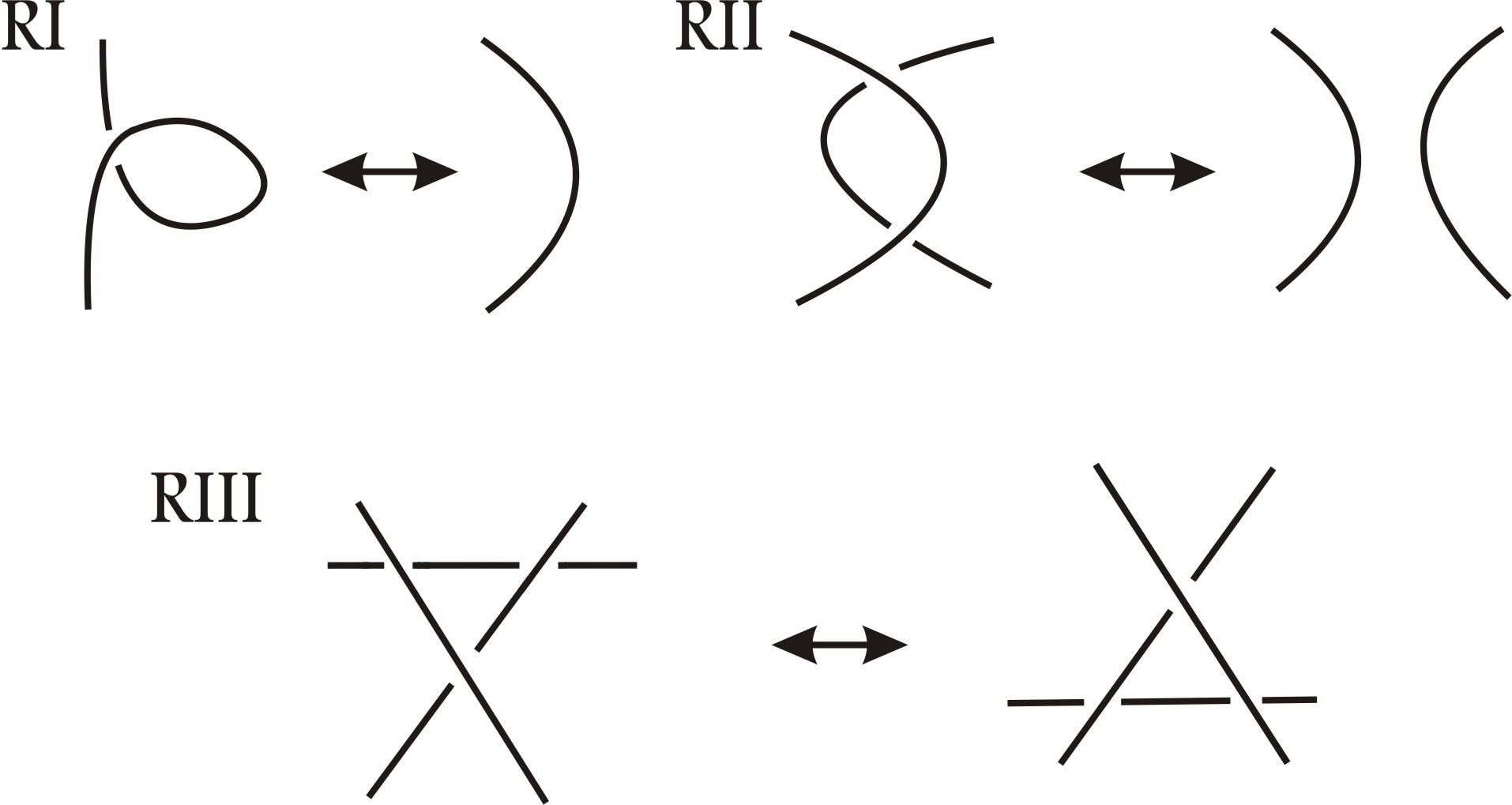} \label{fig1a}
} \medskip \subfigure[Virtual Reidemeister moves.]
{
\includegraphics[scale=.44]{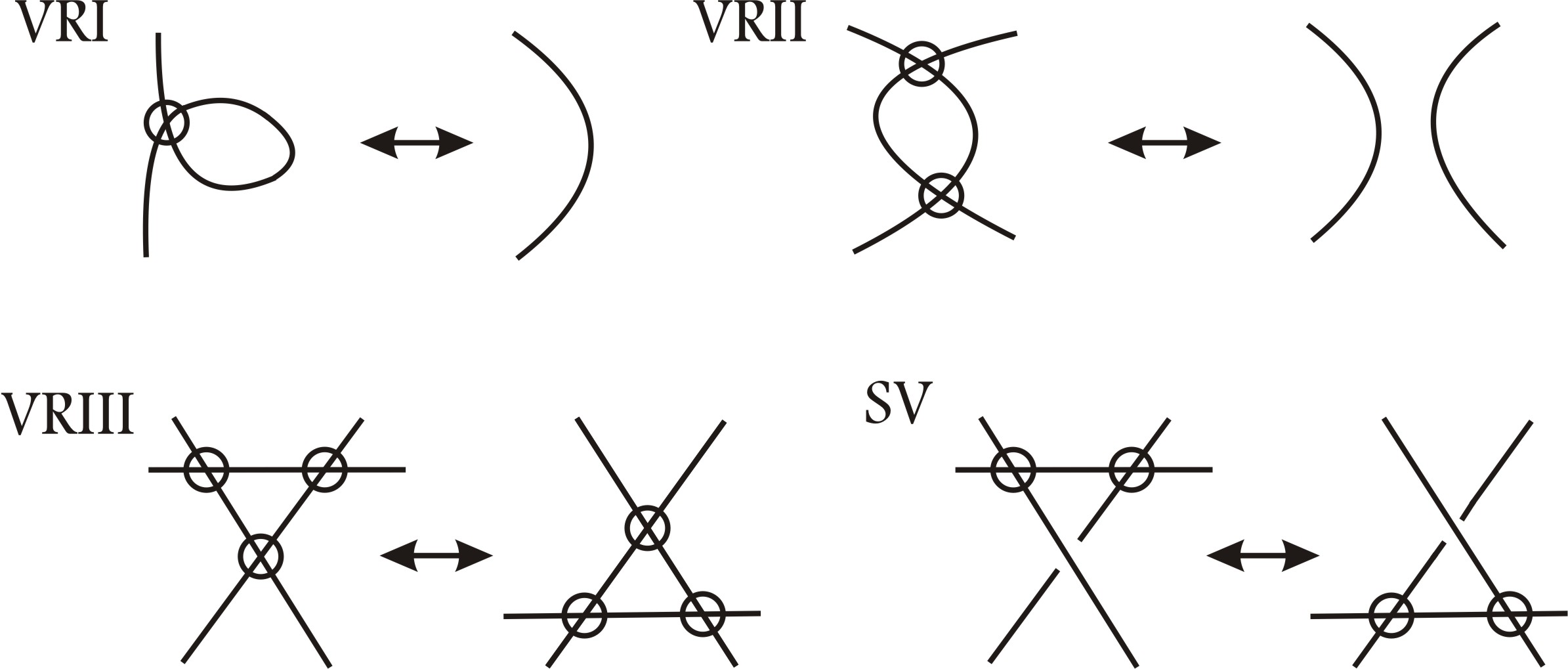} \label{fig1b}
}
\caption{Reidemeister moves.} \label{fig-rei}
\end{figure} 

Various invariants are known to distinguish two virtual knots. We are mainly interested in invariants of polynomial type. In the recent years, many polynomial invariants of virtual knots and links have been introduced. Among them are affine index polynomial by L~Kauffman~\cite{kauffman2013affine}, writhe polynomial by Z.~Cheng and H.~Cao~\cite{cheng2013polynomial}, wriggle polynomial by L.~Folwaczny and L.~Kauffman~\cite{folwaczny2013linking}, arrow polynomial by H.~Dye and L.~Kauffman~\cite{dye2009virtual}, extended bracket polynomial by L.~Kauffman~\cite{kauffman2009extended}, index polynomial by \linebreak Y.-H.~Im, K.~Lee and S.-Y.~Lee~\cite{im2010index} and zero polynomial by M.-J.~Jeong~\cite{jeong2016zero}. 

In this paper, our aim is to introduce new polynomial invariants for virtual knots. We define a sequence of polynomial invariants, $\{ L^n_K(t,\ell)\}_{n=1}^{\infty}$, which we call $L$-\emph{polynomials}, and a sequence of polynomial invariants, $\{ F^n_K(t,\ell)\}_{n=1}^{\infty}$, which we call $F$-\emph{polynomials}. The motivation for $L$-polynomials comes from Kauffman affine index polynomial $P_{K}(t)$~\cite{kauffman2013affine}. Recall that for an oriented virtual knot $K$ this polynomial is defined via its diagram $D$ by 
$$ 
P_{K}(t) = \sum_{c}\operatorname{sgn}(c)(t^{W_D(c)}-1), 
$$
where $\operatorname{sgn}(c)$ denotes the sign of the crossing $c$ in the oriented diagram $D$, and $W_D(c)$ is the weight, associated to the crossing $c$.  

Let $K$ be an oriented virtual knot and $D$ be its diagram. For a positive integer $n$, we consider $n$-th $L$-polynomial of $K$ by assigning two weights for each classical crossing  $c \in D$. One is the \emph{index value} $\operatorname{Ind}(c)$, which was defined in~\cite{cheng2013polynomial} and coincide with $W_D(c)$.  Second is the $n$-\emph{dwrithe} number $\nabla J _{n}(D)$, defined as difference between $n$-writhe and $(-n)$-writhe, with $n$-writhe defined in~\cite{satoh2014writhes}. For each classical crossing $c$ of diagram $D$ we smooth it locally to obtain a virtual knot diagram $D_c$ with one less classical crossing. The smoothing rule is  shown below in Fig.~\ref{fig5}. After smoothing, we calculate $n$-dwrithe value $\nabla J_{n}(D_{c})$ of $D_c$ and assign it to the crossing $c$ of $D$.  Then we define an \emph{$n$-th $L$-polynomial} of $K$ as 
$$
L_{K}^{n}(t,\ell) = \sum_{c\in C(D)}\operatorname{sgn}(c)(t^{\operatorname{Ind}(c)}\ell^{\abs{\nabla J_{n}(D_{c})}}-\ell^{\abs{\nabla J_{n}(D)}}), 
$$
where $C(D)$ denote the set of all classical crossings of $D$. Remark that  $L$-polynomials generalize the affine index polynomial since $P_K(t) = L^n_K (t, 1)$.  

We observe that $L$-polynomials are sometime fail to distinguish two virtual knots, due to the absolute values in powers of variable $\ell$, see definition of $L^n_K(t,\ell)$. To resolve this problem we modify $L$-polynomials and introduce $F$-polynomials. An \emph{$n$-th $F$-polynomial} of oriented virtual knot $K$ is defined via its diagram $D$ as
$$
\begin{gathered}
F_{K}^{n}(t,\ell) = \sum_{c \in C(D)} \operatorname{sgn}(c)t^{\text{Ind}(c)} \ell^{\nabla J_{n}(D_{c})}  \qquad \qquad \qquad  \\ 
\qquad \qquad \qquad \qquad -  \sum _{c\in T_{n}(D)} \operatorname{sgn}(c) \ell^{\nabla J_{n}(D_{c})} - \sum _{c\notin T_{n}(D)} \operatorname{sgn} (c) \ell^{\nabla J_{n}(D)}, 
\end{gathered}
$$ 
where  $T_n(D)$ is a set of crossings of $D$ with the following property: 
$$
T_{n}(D)=\{c \in D  \, \mid  \,  \nabla J_{n}(D_{c}) = \pm \nabla J_{n}(D)\}.
$$ 
These $F$-polynomials are more general than $L$-polynomials and distinguish many virtual knots, which can not be distinguished by $L$-polynomials. 

The paper is organized as follows. �In Section~\ref{affine sec} we recall definitions of affine index and affine index polynomial. Then we define $n$-dwrithe $\nabla J_n (D)$; prove that it is a flat virtual knot invariant (see Lemma~\ref{lemma2}); and describe how it change if we replace $D$ by its inverse $D^-$ or its mirror image $D^*$ (see Lemma~\ref{lemma1}). In Section~\ref{pol} for any positive integer $n$ we define $n$-th $L$-polynomial of oriented virtual knot  diagram and give an example of its computation for a reader convenience. After that we prove that any $n$-th  $L$-polynomial is a virtual knot invariant (see Theorem~\ref{thm-h}).  We observe that $L$-polynomials coincide with the affine index polynomial for classical knots (see Proposition~\ref{prop3.4}). At the end of the section we give an example (see Example~\ref{example3.6}) of oriented virtual knots for which the affine index polynomials and the writhe polynomials are trivial, but their $1$-st $L$-polynomials are non-trivial. In Section~\ref{inv}, we discuss the behavior of $L$-polynomials  under reflection and inversion  (see Theorem~\ref{th-mirror}). In Section~\ref{cs} we deal with the cosmetic crossing change conjecture for virtual knots. We prove that a crossing $c$ is not a cosmetic crossing if $\operatorname{Ind}(c)\neq0$ or $\nabla J_{n}(D_{c})\neq \pm \nabla J_n(D)$ for some $n$ (see Theorem~\ref{theorem5.2}). In Section~\ref{F-poly} for any positive integer $n$ we define $n$-th $F$-polynomial of oriented virtual knot diagram. We prove that for any $n$-th $F$-polynomial is a virtual knot invariant (see Theorem~\ref{gth}). The Example~\ref{ex6.5} gives a pair of oriented virtual knots which are distinguished by $F$-polynomials, whereas the writhe polynomial and $L$-polynomials fails to make distinction between these. 
In Section~\ref{mutbyp} we demonstrate in Examples~\ref{ex4.2} and~\ref{ex4.3} that $F$-polynomials are able to distinguish positive reflection mutants while the affine index polynomial fails to do it.

\section{Index value and dwrithe}
\label{affine sec}

Let $D$ be an oriented virtual knot diagram. By an \emph{arc} we mean an edge between two consecutive classical crossings along the orientation.  The sign of classical crossing $c \in C(D)$, denoted by $\operatorname{sgn}(c)$, is defined as in Fig.~\ref{fig2}.  
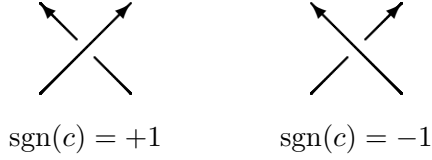
\begin{figure}[!ht]
\centering 
\unitlength=0.6mm
\begin{picture}(0,35)
\thicklines
\qbezier(-40,10)(-40,10)(-20,30)
\qbezier(-40,30)(-40,30)(-32,22) 
\qbezier(-20,10)(-20,10)(-28,18)
\put(-35,25){\vector(-1,1){5}}
\put(-25,25){\vector(1,1){5}}
\put(-30,0){\makebox(0,0)[cc]{$\operatorname{sgn}(c)=+1$}}
\qbezier(40,10)(40,10)(20,30)
\qbezier(40,30)(40,30)(32,22) 
\qbezier(20,10)(20,10)(28,18)
\put(25,25){\vector(-1,1){5}}
\put(35,25){\vector(1,1){5}}
\put(30,0){\makebox(0,0)[cc]{$\operatorname{sgn}(c)=-1$}}
\end{picture}
\caption{Crossing signs.} \label{fig2}
\end{figure}

Now assign an integer value to each arc in $D$ in such a way that the labeling around each crossing point of $D$ follows the rule as shown in Fig.~\ref{fig3}. L.~Kauffman proved in \cite[Proposition~4.1]{kauffman2013affine} that such integer labeling, called a \emph{Cheng coloring}, always exists for an oriented virtual knot diagram. Indeed, for an arc $\alpha$ of $D$ one can take label $\lambda (\alpha) = \sum_{c \in O(\alpha)} \operatorname{sgn} (c)$, where $O(\alpha)$ denotes the set of crossings first met as overcrossings on traveling along orientation, starting at the arc $\alpha$. 
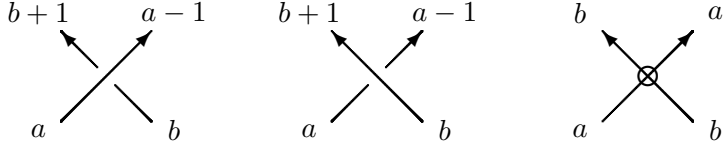
\begin{figure}[!ht]
\centering 
\unitlength=0.6mm
\begin{picture}(0,35)(0,5)
\thicklines
\qbezier(-70,10)(-70,10)(-50,30)
\qbezier(-70,30)(-70,30)(-62,22) 
\qbezier(-50,10)(-50,10)(-58,18)
\put(-65,25){\vector(-1,1){5}}
\put(-55,25){\vector(1,1){5}}
\put(-75,34){\makebox(0,0)[cc]{$b+1$}}
\put(-75,8){\makebox(0,0)[cc]{$a$}}
\put(-45,8){\makebox(0,0)[cc]{$b$}}
\put(-45,34){\makebox(0,0)[cc]{$a-1$}}
\qbezier(10,10)(10,10)(-10,30)
\qbezier(10,30)(10,30)(2,22) 
\qbezier(-10,10)(-10,10)(-2,18)
\put(-5,25){\vector(-1,1){5}}
\put(5,25){\vector(1,1){5}}
\put(-15,34){\makebox(0,0)[cc]{$b+1$}}
\put(-15,8){\makebox(0,0)[cc]{$a$}}
\put(15,8){\makebox(0,0)[cc]{$b$}}
\put(15,34){\makebox(0,0)[cc]{$a-1$}}
\qbezier(70,10)(70,10)(50,30)
\qbezier(70,30)(70,30)(50,10) 
\put(55,25){\vector(-1,1){5}}
\put(65,25){\vector(1,1){5}}
\put(60,20){\circle{4}}
\put(45,34){\makebox(0,0)[cc]{$b$}}
\put(45,8){\makebox(0,0)[cc]{$a$}}
\put(75,8){\makebox(0,0)[cc]{$b$}}
\put(75,34){\makebox(0,0)[cc]{$a$}}
\end{picture}
\caption{Labeling around crossing.} \label{fig3}
\end{figure}

 After labeling assign a weight $W_{D}(c)$ to each classical crossing $c$ is defined in  \cite{kauffman2013affine} as 
$$
W_{D}(c) = \operatorname{sgn} (c)(a-b-1). 
$$ 
Then the \emph{affine index polynomial} of virtual knot diagram $D$ is defined as
\begin{equation}
P_{D}(t) = \sum _{c \in C(D)} \operatorname{sgn}(c)(t^{W_{D}(c)}-1) \label{eqn1}
\end{equation} 
where the summation runs over the set $C(D)$ of classical crossings of $D$. In \cite{folwaczny2013linking}, L.~Folwaczny and L.~Kauffman  defined an invariant of virtual knots, called \emph{wriggle polynomial}, and proved that the wriggle polynomial is an alternate definition of the affine index polynomial.  

In \cite{cheng2013polynomial}, Z. Cheng and H. Gao assigned an integer value, called \emph{index value}, to each classical crossing $c$ of a virtual knot diagram and denoted it by $\operatorname{Ind}(c)$. It was proved  \cite[Theorem~3.6]{cheng2013polynomial} that  
\begin{equation}
\operatorname{Ind}(c) = W_{D}(c) = \operatorname{sgn} (c)(a-b-1)  \label{eq2.2}
\end{equation}
with $a$ and $b$ be labels as presented in Fig.~\ref{fig3}. 
Therefore, we can compute the index value through labeling procedure as  given in Fig.~\ref{fig3} and replace $W_{D}(c)$ by $\operatorname{Ind}(c)$ in equation~(\ref{eqn1}). Hence the affine index polynomial can be rewritten as   
$$
P_{D}(t) = \sum _{c\in C(D)} \operatorname{sgn}(c) (t^{\operatorname{Ind}(c)}-1) .
$$

In \cite{satoh2014writhes}, S.~Satoh and K.~Taniguchi introduced the $n$-th writhe. For each $n \in \mathbb{Z}\setminus \{0\}$ the  \emph{$n$-th writhe $J_n(D)$} of an oriented virtual link diagram $D$ is defined as the number of positive sign crossings minus number of negative sign crossings of $D$ with index value $n$. Remark, that $J_n (D)$ is indeed coefficient of $t^n$ in the affine index polynomial. This $n$-th writhe is a virtual knot invariant, for more details we refer to \cite{satoh2014writhes}. Using $n$-th writhe, we define a new invariant as follows. 

 \begin{definition} {\rm 
 Let $n\in \mathbb{N}$ and $D$ be an oriented virtual knot diagram. Then the \emph{$n$-th dwrithe} of $D$, denoted by $\nabla J_{n}(D)$, is defined as   
 $$
 \nabla J_{n}(D)=J_{n}(D)-J_{-n}(D).
 $$
 }
 \end{definition}
 
\begin{remark} \label{rem-inv} 
{\rm 
The $n$-th dwrithe  $\nabla J_{n}(D)$ is a virtual knot invariant, since  $n$-th writhe $J_n(D)$ is an oriented virtual knot invariant by~\cite{satoh2014writhes}. 
}
\end{remark} 

Obviously, $\nabla J_{n}(D) =0$ for any classical knot diagram. 
  
 \begin{remark} \label{rem2.2} {\rm 
Let $D$ be a virtual knot diagram. Consider set of all affine index values  of crossing points:  
$$
S(D)=\{ \abs{ \operatorname{Ind}(c) } \, : \, c \in C(D)\} \subset \mathbb{N}. 
$$ 
Then $\nabla J_n(D)=0$ for any $n\in \mathbb{N}\setminus S(D)$. 
}
\end{remark}

A \emph{flat virtual knot diagram} is a virtual knot diagram obtained by forgetting the over/under-information of every real crossing.  It means that a flat virtual knot is an equivalence class of flat virtual knot diagrams by \emph{flat Reidemeister moves} which are Reidemeister moves (see Figs.~\ref{fig1a} and \ref{fig1b}) without the over/under information. We will say that a virtual knot invariant is a \emph{flat virtual knot invariant}, if it is independent of crossing change operation.  

\begin{lemma} \label{lemma2}
For any $n \in \mathbb{N}$, the $n$-th dwrithe $\nabla J_n (D)$ is a flat virtual knot invariant. 
\end{lemma}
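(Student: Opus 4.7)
The plan is to leverage Remark~\ref{rem-inv}, which has already established invariance of $\nabla J_n$ under the classical and virtual Reidemeister moves. Since flat equivalence of virtual knot diagrams is generated by these moves together with the crossing-change operation, it suffices to verify $\nabla J_n(D) = \nabla J_n(D')$ whenever $D'$ is obtained from $D$ by exchanging over and under strands at a single classical crossing $c_0$.

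The cornerstone of the argument will be an observation drawn directly from Fig.~\ref{fig3}: the Cheng-coloring rule is identical at positive and negative classical crossings. In both cases the four arcs carry labels $a$, $b$, $b+1$, $a-1$ at the south-west, south-east, north-west and north-east corners respectively, positions determined purely by the orientations of the two strands and not by the choice of over/under. Consequently any Cheng coloring of $D$ remains a Cheng coloring of $D'$, and in particular at every classical crossing the difference $a - b - 1$ is unchanged when passing from $D$ to $D'$.

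I will then combine this with the obvious sign behaviour. Under the crossing change at $c_0$, the sign $\operatorname{sgn}(c_0)$ flips by the right-hand rule, whereas $\operatorname{sgn}(c)$ is unchanged for $c \ne c_0$. Together with equation~(\ref{eq2.2}) and the preservation of $a - b - 1$, this implies $\operatorname{Ind}(c_0)$ flips sign while $\operatorname{Ind}(c)$ is unchanged for $c \ne c_0$. The contribution of any $c \ne c_0$ to $J_n - J_{-n}$ is therefore unchanged. For $c_0$ itself, the simultaneous flip of both $\operatorname{sgn}(c_0)$ and $\operatorname{Ind}(c_0)$ means that a contribution of $\operatorname{sgn}(c_0)$ to $J_{\operatorname{Ind}(c_0)}$ in $D$ becomes a contribution of $-\operatorname{sgn}(c_0)$ to $J_{-\operatorname{Ind}(c_0)}$ in $D'$, which yields the same contribution to $\nabla J_n = J_n - J_{-n}$. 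Summing over all crossings then gives $\nabla J_n(D) = \nabla J_n(D')$.

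The main point demanding care is the first step, namely verifying that the labeling rule really does depend only on the orientations at a crossing and not on its over/under type; this is where the \emph{flat} hypothesis of the lemma is effectively used. Once this symmetry is confirmed by inspection of Fig.~\ref{fig3}, the remainder is a short bookkeeping exercise tracking how $\operatorname{sgn}$ and $\operatorname{Ind}$ behave at the single modified crossing.
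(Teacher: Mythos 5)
Your proposal is correct and follows essentially the same route as the paper: reduce to invariance under a single crossing change via Remark~\ref{rem-inv}, observe that $\operatorname{sgn}$ and $\operatorname{Ind}$ both flip sign at the changed crossing while every other crossing is unaffected, and check that the net contribution to $J_n - J_{-n}$ is preserved. The only differences are cosmetic: you justify the sign flip of $\operatorname{Ind}$ by noting that the Cheng-coloring rule in Fig.~\ref{fig3} is independent of the over/under information (a fact the paper simply asserts), and you organize the bookkeeping per crossing rather than by cases on whether $n = \pm\operatorname{Ind}(c)$.
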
 

\begin{proof} 
As we observed above, for any $n \in \mathbb{N}$ dwrithe $\nabla J_{n}(D)$ is a virtual knot invariant. To prove that it is a flat virtual knot invariant, we need to show that $\nabla J_{n}(D)$ is an invariant under the crossing change operation. 

Let $D'$ be the diagram obtained from $D$ by applying crossing change operation at a crossing $c$ and let $c'$ be the corresponding crossing in $D'$. Then $\operatorname{Ind}(c')=-\operatorname{Ind}(c)$ and $\operatorname{sgn}(c')=-\operatorname{sgn}(c)$. If $n \neq \pm \operatorname{Ind}(c) $, then   $J_{n}(D')=J_{n}(D)$ and  $J_{-n}(D')=J_{-n}(D)$. If $n = \operatorname{Ind}(c)$ then  $J_{n}(D') = J_{n}(D) - \operatorname{sgn}(c)$ and  $J_{-n}(D') = J_{-n} (D) + \operatorname{sgn} (c') = J_{-n}(D) - \operatorname{sgn}(c)$, hence $\nabla J_{n}(D') = \nabla J_{n}(D)$. Analogously, if $n = -  \operatorname{Ind}(c)$ then  $J_{-n}(D') = J_{-n}(D) - \operatorname{sgn}(c)$ and  $J_{n}(D') = J_{n} (D) + \operatorname{sgn} (c') = J_{n}(D) - \operatorname{sgn}(c)$, hence $\nabla J_{n}(D') = \nabla J_{n}(D)$
Thus, $n$-th dwrithe is invariant under crossing change operations and it is a flat virtual knot invariant.
\end{proof} 
     
Let $D^-$ be the \emph{reverse} of $D$, obtained from $D$ by reversing the orientation and let $D^*$ be the \emph{mirror image} of $D$, obtained by switching all the classical crossings in~$D$. 

\begin{lemma} \label{lemma1}
If $D$ is an oriented virtual knot diagram, then $\nabla J_{n}(D^*)=\nabla J_{n}(D)$ and $\nabla J_{n}(D^-)=-\nabla J_{n}(D)$. 
\end{lemma}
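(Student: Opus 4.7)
The plan is to reduce both identities to the elementary question of how the sign $\operatorname{sgn}(c)$ and the index value $\operatorname{Ind}(c)$ of each classical crossing transform under the two operations $D\mapsto D^*$ and $D\mapsto D^-$. Once these transformation rules are established, the definition $J_n(D)=\sum_{\operatorname{Ind}(c)=n}\operatorname{sgn}(c)$ gives the effect on $J_n$, and the formula for $\nabla J_n$ follows by subtraction.

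For the mirror $D^*$, every classical crossing undergoes a crossing change, so the convention in Fig.~\ref{fig2} gives $\operatorname{sgn}(c^*)=-\operatorname{sgn}(c)$ immediately. For the index I would check that the Cheng coloring $\lambda$ used on $D$ is also a valid Cheng coloring on $D^*$: the over/under roles swap, and one verifies in both sign cases of Fig.~\ref{fig3} that the four labels around the crossing still satisfy the relevant local rule for the new crossing type. Using $\operatorname{Ind}(c)=\operatorname{sgn}(c)(a-b-1)$ with the roles of the two labels swapped then gives $\operatorname{Ind}(c^*)=-\operatorname{Ind}(c)$. A crossing of $D^*$ with sign $+1$ and index $n$ therefore corresponds to a crossing of $D$ with sign $-1$ and index $-n$, so $J_n(D^*)=-J_{-n}(D)$ and $J_{-n}(D^*)=-J_n(D)$. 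Subtracting yields $\nabla J_n(D^*)=J_n(D)-J_{-n}(D)=\nabla J_n(D)$.

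For the reversed diagram $D^-$, both strands at each crossing are reversed simultaneously, so the sign is preserved: $\operatorname{sgn}(c^-)=\operatorname{sgn}(c)$. Here the original coloring $\lambda$ no longer satisfies the labeling rule, but one checks locally at each of the two sign cases that $-\lambda$ is a valid Cheng coloring on $D^-$ (the arrows on Fig.~\ref{fig3} flip, and negating all labels restores the $\pm 1$ increments at each strand). With the incoming arcs at each crossing now being the former outgoing arcs, re-labeled by $-\lambda$, a direct substitution into $\operatorname{Ind}(c^-)=\operatorname{sgn}(c^-)(a^--b^--1)$ collapses to $-\operatorname{Ind}(c)$. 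Consequently $J_n(D^-)=J_{-n}(D)$ and $J_{-n}(D^-)=J_n(D)$, whence $\nabla J_n(D^-)=J_{-n}(D)-J_n(D)=-\nabla J_n(D)$.

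The main obstacle is the local verification that $\operatorname{Ind}(c)$ flips sign under each operation. The transformation of $\operatorname{sgn}$ is immediate from Fig.~\ref{fig2}, but one must produce a compatible Cheng coloring on the new diagram (the same one for $D^*$, the negated one for $D^-$) and then re-evaluate $\operatorname{Ind}$ from the formula. Once this is in hand, both identities are obtained by straightforward bookkeeping between the sums defining $J_n$ and $J_{-n}$.
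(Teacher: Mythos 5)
Your proof is correct and follows essentially the same route as the paper: both arguments reduce the claim to the transformation rules $\operatorname{sgn}(c^*)=-\operatorname{sgn}(c)$, $\operatorname{sgn}(c^-)=\operatorname{sgn}(c)$, $\operatorname{Ind}(c^*)=\operatorname{Ind}(c^-)=-\operatorname{Ind}(c)$, and then to the identities $J_{\pm n}(D^*)=-J_{\mp n}(D)$ and $J_{\pm n}(D^-)=J_{\mp n}(D)$, from which the lemma follows by subtraction. The only cosmetic difference is that you exhibit $-\lambda$ as a Cheng coloring of $D^-$, whereas the paper invokes Kauffman's identity $\lambda(\alpha)+\lambda(\alpha^-)=\operatorname{wr}(D)$ and so uses $\operatorname{wr}(D)-\lambda$; the two colorings differ by a global constant and therefore yield the same index values.
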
 

\begin{proof}  Let $c$ be a crossing in $D$, and $c^*$ and $c^-$ be the corresponding crossings in $D^*$ and $D^-$, respectively.  The case when $\operatorname{sgn}(c)=1$ is presented in Fig.~\ref{fig4new}. 
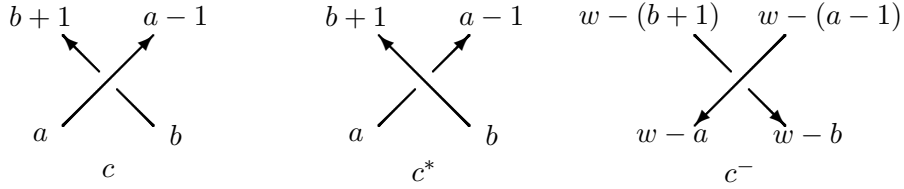
\begin{figure}[!ht]
\centering 
\unitlength=0.6mm
\begin{picture}(0,35)
\thicklines
\qbezier(-80,10)(-80,10)(-60,30)
\qbezier(-80,30)(-80,30)(-72,22) 
\qbezier(-60,10)(-60,10)(-68,18)
\put(-75,25){\vector(-1,1){5}}
\put(-65,25){\vector(1,1){5}}
\put(-85,34){\makebox(0,0)[cc]{$b+1$}}
\put(-85,8){\makebox(0,0)[cc]{$a$}}
\put(-55,8){\makebox(0,0)[cc]{$b$}}
\put(-55,34){\makebox(0,0)[cc]{$a-1$}}
\put(-70,0){\makebox(0,0)[cc]{$c$}}
\qbezier(10,10)(10,10)(-10,30)
\qbezier(10,30)(10,30)(2,22) 
\qbezier(-10,10)(-10,10)(-2,18)
\put(-5,25){\vector(-1,1){5}}
\put(5,25){\vector(1,1){5}}
\put(-15,34){\makebox(0,0)[cc]{$b+1$}}
\put(-15,8){\makebox(0,0)[cc]{$a$}}
\put(15,8){\makebox(0,0)[cc]{$b$}}
\put(15,34){\makebox(0,0)[cc]{$a-1$}}
\put(0,0){\makebox(0,0)[cc]{$c^{*}$}}
\qbezier(80,10)(80,10)(72,18)
\qbezier(60,30)(60,30)(68,22)
\qbezier(80,30)(80,30)(60,10) 
\put(65,15){\vector(-1,-1){5}}
\put(75,15){\vector(1,-1){5}}
\put(50,34){\makebox(0,0)[cc]{$w - (b+1)$}}
\put(55,8){\makebox(0,0)[cc]{$w-a$}}
\put(85,8){\makebox(0,0)[cc]{$w-b$}}
\put(90,34){\makebox(0,0)[cc]{$w-(a-1)$}}
\put(70,0){\makebox(0,0)[cc]{$c^-$}}
\end{picture}
\caption{Labeling around crossings.} \label{fig4new}
\end{figure}

It is clear from Fig.~\ref{fig4new}, that $\operatorname{sgn}(c^-) = \operatorname{sgn} (c)$ and $\operatorname{sgn} (c^*) = - \operatorname{sgn}(c)$. It is easy to see from the definition (see Fig.~\ref{fig3}), that Cheng coloring of $D^*$ coincides with Cheng coloring of $D$. Hence $\operatorname{Ind} (c^*) = - \operatorname{Ind}(c)$. To obtain Cheng coloring of $D^-$ we refer to \cite[Proposition~4.2]{kauffman2013affine}: if $\lambda (\alpha)$ is the above defined labeling function that count overcrossings with signs, then for the arc $\alpha^{-} \in D^{-}$, corresponding to arc $\alpha \in D$, the following property holds: $\lambda(\alpha) + \lambda(\alpha^{-}) = w$, where $w = \operatorname{wr} (D)$ is the writhe number of $D$. Hence $\operatorname{Ind} (c^-) = - \operatorname{Ind} (c)$. 

Consider the set $S(D)$ introduced for a diagram $D$ in Remark~\ref{rem2.2}. Assume that  $n \notin S(D)$. Then by Remark~\ref{rem2.2} we have $\nabla J_{n}(D) = 0$, and hence $\nabla J_{n}(D^*) = \nabla J_{n}(D)=0$ and $\nabla J_{n}(D^-) = \nabla J_{n}(D) =0$.  

Now assume that $n \in S(D)$. Then we have   
$$
J_{\pm n}(D^*) = -J_{\mp n}(D) \quad \text{and} \quad  J_{\pm n}(D^-)=J_{\mp n}(D).
$$ 
Therefore, 
$$
\nabla J_{n}(D^*) = J_{n}(D^{*}) - J_{-n}(D^*) =- J_{-n}(D)-(-J_{n}(D))=\nabla J_{n}(D), 
$$
and, analogously,  
$$
\nabla J_{n}(D^-) = J_{n}(D^-) - J_{-n}(D^-) = J_{-n}(D)-J_{n}(D) = -\nabla J_{n}(D). 
$$ 
\end{proof}

\section{L-polynomials of virtual knot diagrams}  \label{pol}

Let $c$ be a classical crossing  of an oriented virtual knot diagram $D$. There are two possibility to smooth in $c$. One is to smooth \emph{along} the orientation of arcs shown in~Fig.~\ref{fig4}. 
\begin{figure}[!ht]
\centering 
\unitlength=0.6mm
\begin{picture}(0,25)(0,10)
\thicklines
\qbezier(-70,10)(-70,10)(-50,30)
\qbezier(-70,30)(-70,30)(-62,22) 
\qbezier(-50,10)(-50,10)(-58,18)
\put(-65,25){\vector(-1,1){5}}
\put(-55,25){\vector(1,1){5}}
\qbezier(10,10)(10,10)(-10,30)
\qbezier(10,30)(10,30)(2,22) 
\qbezier(-10,10)(-10,10)(-2,18)
\put(-5,25){\vector(-1,1){5}}
\put(5,25){\vector(1,1){5}}
\put(-30,20){\makebox(0,0)[cc]{or}}
\put(30,20){\makebox(0,0)[cc]{$\longrightarrow$}}
\qbezier(50,10)(60,20)(50,30)
\qbezier(70,10)(60,20)(70,30) 
\put(55,25){\vector(-1,1){5}}
\put(65,25){\vector(1,1){5}}
\end{picture}
\caption{Smoothing along orientation.} \label{fig4}
\end{figure}
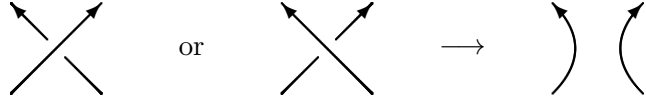

Another is smoothing \emph{against}  the orientation of arcs shown in Fig.~\ref{fig5}. 
\begin{figure}[!ht]
\centering 
\unitlength=0.6mm
\begin{picture}(0,25)(0,10)
\thicklines
\qbezier(-80,10)(-80,10)(-60,30)
\qbezier(-80,30)(-80,30)(-72,22) 
\qbezier(-60,10)(-60,10)(-68,18)
\put(-75,25){\vector(-1,1){5}}
\put(-65,25){\vector(1,1){5}}
\put(-50,20){\makebox(0,0)[cc]{$\longrightarrow$}}
\qbezier(-40,30)(-30,20)(-20,30)
\qbezier(-40,10)(-30,20)(-20,10) 
\put(-35,15){\vector(-1,-1){5}}
\put(-25,25){\vector(1,1){5}}
\put(0,20){\makebox(0,0)[cc]{and}}
\qbezier(20,30)(20,30)(40,10)
\qbezier(20,10)(20,10)(28,18) 
\qbezier(40,30)(40,30)(32,22)
\put(25,25){\vector(-1,1){5}}
\put(35,25){\vector(1,1){5}}
\put(50,20){\makebox(0,0)[cc]{$\longrightarrow$}}
\qbezier(60,30)(70,20)(80,30)
\qbezier(60,10)(70,20)(80,10) 
\put(65,25){\vector(-1,1){5}}
\put(75,15){\vector(1,-1){5}}
\end{picture}
\caption{Smoothing against orientation.} \label{fig5}
\end{figure}
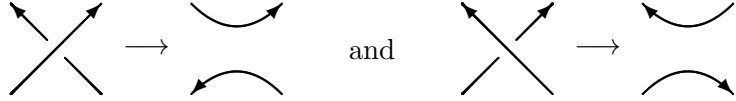

Let us denote by $D_{c}$ the oriented diagram obtained from $D$ by smoothing at $c$ against the orientation of arcs. The orientation of $D_{c}$ is induced by the orientation of smoothing.  Since $D$ is a virtual knot diagram, $D_{c}$ is also a virtual knot diagram. 

\begin{definition} \label{h} {\rm 
Let $D$ be an oriented virtual knot diagram and $n$ be any positive integer. Then \emph{$n$-th $L$-polynomial} of $D$ at $n$ is define as 
$$
L_{D}^{n}(t,\ell) = \sum_{c \in C(D)} \operatorname{sgn}(c)(t^{\operatorname{Ind}(c)} \ell^{\abs{\nabla J_{n}(D_{c})}} -\ell^{\abs{\nabla J_{n}(D)}}).
$$
}
\end{definition}

Since number of crossings in a virtual knot diagram is finite, it is easy to see that for a given virtual knot diagram $D$ there exists a positive integer $N$, such that $\nabla J_{n}(D)=0$ and $\nabla J_{n}(D_{c})=0$ for all $n > N$ and $c \in C(D)$. Therefore, $L^{n}_{D}(t,\ell) = P_{D}(t)$ for all $n >  N$. 

More precisely, let us denote by $\abs{C(D)}$ the cardinality of the set of classical crossings of $D$. Then by \cite[Proposition~4.1]{kauffman2013affine} for any arc of $D$ the absolute value of  its label is at most $\abs{C(D)}$.  Remark that  $\abs{C(D_c)} =\abs{C(D)} -1$ for any $c \in C(D)$. Hence, for any crossing $c'$ in $D$ or  $D_c$ the inequality $| \operatorname{Ind} (c')  | \leq 2 \abs{C(D)} + 1$ holds. Thus for any $n > 2  \abs{C(D)} + 1$, we have  $\nabla J_{n}(D_{c})=\nabla J_{n}(D)=0$ and $L^{n}_{D}(t,\ell)=P_{D}(t)$.

For a diagram $D$ we consider  a set $\mathcal{N}_D \subset \mathbb{N}$ defined by  
$$
\mathcal{N}_D=\displaystyle\bigcup_{c\in C(D)} S_{D_c}\cup S_{D},  
$$ 
where $S(D)$ and $S(D_c)$ are as in Remark~\ref{rem2.2}. 
Then $L_{D}^{n}(t,\ell)=P_{D}(t)$ for any $n \notin \mathcal{N}_D$. 

Before discussing properties of $L$-polynomial we give an example of its calculation. 

\begin{example}  \label{ex3.2} {\rm 
Let us consider an oriented virtual knot diagram $D$ presented in~Fig.~\ref{fig6}. 
\begin{figure}[!ht]
\centering 
\unitlength=0.34mm
\begin{picture}(0,165)
\put(-60,0){\begin{picture}(0,160)
\thicklines
\qbezier(10,160)(10,160)(-10,140) 
\qbezier(-10,160)(-10,160)(-3,153)
\qbezier(10,140)(10,140)(3,147)
\qbezier(10,140)(10,140)(-10,120) 
\qbezier(-10,140)(-10,140)(-3,133)
\qbezier(10,120)(10,120)(3,127)
\qbezier(-10,120)(-10,120)(-10,100)
\qbezier(10,120)(10,120)(30,100)
\qbezier(10,160)(30,160)(30,140)
\qbezier(30,140)(30,140)(30,120)
\qbezier(30,120)(30,120)(10,100)
\put(20,110){\circle{6}}
\put(30,140){\vector(0,1){5}}
\qbezier(-10,100)(-10,100)(10,80)
\qbezier(10,100)(10,100)(-10,80)
\put(0,90){\circle{6}}
\qbezier(-10,80)(-10,80)(10,60)
\qbezier(10,80)(10,80)(3,73) 
\qbezier(-10,60)(-10,60)(-3,67) 
\qbezier(30,100)(30,100)(30,60)
\qbezier(-10,60)(-10,60)(-10,20)
\qbezier(10,60)(10,60)(30,40)
\qbezier(10,40)(10,40)(30,60)
\put(20,50){\circle{6}}
\qbezier(10,40)(10,40)(30,20)
\qbezier(30,40)(30,40)(23,33)
\qbezier(10,20)(10,20)(17,27)
\qbezier(-10,20)(-10,20)(10,0)
\qbezier(-10,0)(-10,0)(10,20)
\put(0,10){\circle{6}}
\qbezier(10,0)(30,0)(30,20)
\qbezier(-10,0)(-30,0)(-30,20)
\qbezier(-30,20)(-30,20)(-30,140)
\qbezier(-30,140)(-30,160)(-10,160)
\put(-15,150){\makebox(0,0)[cc]{\footnotesize $\alpha$}}
\put(-15,130){\makebox(0,0)[cc]{\footnotesize $\beta$}}
\put(-15,70){\makebox(0,0)[cc]{\footnotesize $\gamma$}}
\put(35,30){\makebox(0,0)[cc]{\footnotesize $\delta$}}
\end{picture}} 
\put(60,0){\begin{picture}(0,160)
\thicklines
\qbezier(10,160)(10,160)(-10,140) 
\qbezier(-10,160)(-10,160)(-3,153)
\qbezier(10,140)(10,140)(3,147)
\qbezier(10,140)(10,140)(-10,120) 
\qbezier(-10,140)(-10,140)(-3,133)
\qbezier(10,120)(10,120)(3,127)
\qbezier(-10,120)(-10,120)(-10,100)
\qbezier(10,120)(10,120)(30,100)
\qbezier(10,160)(30,160)(30,140)
\qbezier(30,140)(30,140)(30,120)
\qbezier(30,120)(30,120)(10,100)
\put(20,110){\circle{6}}
\qbezier(-10,100)(-10,100)(10,80)
\qbezier(10,100)(10,100)(-10,80)
\put(0,90){\circle{6}}
\qbezier(-10,80)(-10,80)(10,60)
\qbezier(10,80)(10,80)(3,73) 
\qbezier(-10,60)(-10,60)(-3,67) 
\qbezier(30,100)(30,100)(30,60)
\qbezier(-10,60)(-10,60)(-10,20)
\qbezier(10,60)(10,60)(30,40)
\qbezier(10,40)(10,40)(30,60)
\put(20,50){\circle{6}}
\qbezier(10,40)(10,40)(30,20)
\qbezier(30,40)(30,40)(23,33)
\qbezier(10,20)(10,20)(17,27)
\qbezier(-10,20)(-10,20)(10,0)
\qbezier(-10,0)(-10,0)(10,20)
\put(0,10){\circle{6}}
\qbezier(10,0)(30,0)(30,20)
\qbezier(-10,0)(-30,0)(-30,20)
\qbezier(-30,20)(-30,20)(-30,140)
\qbezier(-30,140)(-30,160)(-10,160)
\put(0,150){\vector(-1,-1){8}}
\put(-3,153){\vector(-1,1){5}}
\put(0,130){\vector(1,1){8}}
\put(3,127){\vector(1,-1){5}}
\put(3,73){\vector(1,1){5}}
\put(0,70){\vector(-1,1){8}}
\put(23,33){\vector(1,1){5}}
\put(20,30){\vector(1,-1){8}}
\put(-20,165){\makebox(0,0)[cc]{\footnotesize $-2$}}
\put(-15,140){\makebox(0,0)[cc]{\footnotesize $1$}}
\put(18,140){\makebox(0,0)[cc]{\footnotesize $-1$}}
\put(15,123){\makebox(0,0)[cc]{\footnotesize $0$}}
\put(35,133){\makebox(0,0)[cc]{\footnotesize $0$}}
\put(-18,110){\makebox(0,0)[cc]{\footnotesize $-2$}}
\put(15,97){\makebox(0,0)[cc]{\footnotesize $0$}}
\put(35,80){\makebox(0,0)[cc]{\footnotesize $0$}}
\put(-15,80){\makebox(0,0)[cc]{\footnotesize $0$}}
\put(18,80){\makebox(0,0)[cc]{\footnotesize $-2$}}
\put(15,63){\makebox(0,0)[cc]{\footnotesize $-1$}}
\put(-20,40){\makebox(0,0)[cc]{\footnotesize $-1$}}
\put(5,40){\makebox(0,0)[cc]{\footnotesize $0$}}
\put(40,40){\makebox(0,0)[cc]{\footnotesize $-1$}}
\put(0,20){\makebox(0,0)[cc]{\footnotesize $-2$}}
\put(40,20){\makebox(0,0)[cc]{\footnotesize $-1$}}
\end{picture}} 
\end{picture}
\caption{Oriented virtual diagram $D$ and its labeling.} \label{fig6}
\end{figure}
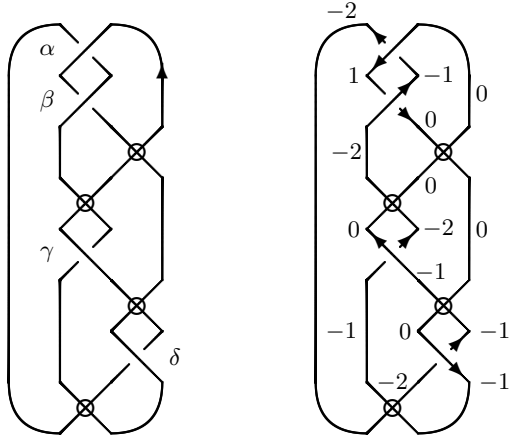
The diagram $D$ has four classical crossings denoted by $\alpha$, $\beta$, $\gamma$, and $\delta$, see the left-hand picture. In the right-hand picture we presented orientation of arcs for each classical crossing and the corresponding labeling, satisfying the rule given in Fig.~\ref{fig2}. 
Crossing signs can be easy found from arc orientations around crossing points given in Fig.~\ref{fig6}: 
$$
\operatorname{sgn}(\alpha) = \operatorname{sgn}(\beta) = \operatorname{sgn}(\gamma) =-1 \qquad  \text{and} \qquad \operatorname{sgn}(\delta)=1.
$$
Index values can be calculated directly from crossing signs and labeling of arcs by Eq.~(\ref{eq2.2}): 
$$
\operatorname{Ind}(\alpha)=2,  \quad \operatorname{Ind}(\beta)=-2, \quad \operatorname{Ind}(\gamma)=1, \quad \operatorname{Ind}(\delta)=1. 
$$
Therefore, only the following writhe numbers can be non-trivial: $J_1(D)$, $J_2(D)$, and $J_{-2}(D)$. It is easy to see, that $J_1(D) = \operatorname{sgn} (\gamma) + \operatorname{sgn}(\delta) = 0$, $J_2 (D) = \operatorname{sgn}(\alpha) = -1$, and $J_{-2} (D) = \operatorname{sgn}(\beta) = -1$. Then $\nabla J_1(D) = J_1(D) - J_{-1}(D) = 0$ and $\nabla J_2 (D) = J_2(D) - J_{-2}(D) =0$. For any $n \geq 3$ we have $\nabla J_n (D) =0$.  

Let us consider against orientation smoothings at classical crossing points $\alpha$, $\beta$, $\gamma$, and $\delta$ of $D$. The resulting oriented virtual knot diagrams $D_{\alpha}$, $D_{\beta}$, $D_{\gamma}$, and $D_{\delta}$ are presented in Fig~\ref{fig7}, where classical crossing points have the same notations as in $D$ and labelings of arcs (Cheng colorings) are given.  Orientations of these diagrams are induced by orientations of smoothings. 
\begin{figure}[!ht]
\centering 
\unitlength=0.34mm
\begin{picture}(0,180)(0,-10)
\put(-150,0){\begin{picture}(0,170)
\thicklines
\qbezier(10,160)(10,160)(10,140) 
\qbezier(-10,160)(-10,160)(-10,140)
\qbezier(10,140)(10,140)(-10,120) 
\qbezier(-10,140)(-10,140)(-3,133)
\qbezier(10,120)(10,120)(3,127)
\qbezier(-10,120)(-10,120)(-10,100)
\qbezier(10,120)(10,120)(30,100)
\qbezier(10,160)(30,160)(30,140)
\qbezier(30,140)(30,140)(30,120)
\qbezier(30,120)(30,120)(10,100)
\put(20,110){\circle{6}}
\qbezier(-10,100)(-10,100)(10,80)
\qbezier(10,100)(10,100)(-10,80)
\put(0,90){\circle{6}}
\qbezier(-10,80)(-10,80)(10,60)
\qbezier(10,80)(10,80)(3,73) 
\qbezier(-10,60)(-10,60)(-3,67) 
\qbezier(30,100)(30,100)(30,60)
\qbezier(-10,60)(-10,60)(-10,20)
\qbezier(10,60)(10,60)(30,40)
\qbezier(10,40)(10,40)(30,60)
\put(20,50){\circle{6}}
\qbezier(10,40)(10,40)(30,20)
\qbezier(30,40)(30,40)(23,33)
\qbezier(10,20)(10,20)(17,27)
\qbezier(-10,20)(-10,20)(10,0)
\qbezier(-10,0)(-10,0)(10,20)
\put(0,10){\circle{6}}
\qbezier(10,0)(30,0)(30,20)
\qbezier(-10,0)(-30,0)(-30,20)
\qbezier(-30,20)(-30,20)(-30,140)
\qbezier(-30,140)(-30,160)(-10,160)
\put(-15,130){\makebox(0,0)[cc]{\footnotesize $\beta$}}
\put(-15,70){\makebox(0,0)[cc]{\footnotesize $\gamma$}}
\put(35,30){\makebox(0,0)[cc]{\footnotesize $\delta$}}
\put(-10,150){\vector(0,-1){5}}
\put(10,150){\vector(0,1){5}}
\put(0,130){\vector(1,1){8}}
\put(3,127){\vector(1,-1){5}}
\put(3,73){\vector(1,1){5}}
\put(0,70){\vector(1,-1){8}}
\put(17,27){\vector(-1,-1){5}}
\put(20,30){\vector(1,-1){8}}
\put(-20,165){\makebox(0,0)[cc]{\footnotesize $-2$}}
\put(15,123){\makebox(0,0)[cc]{\footnotesize $-3$}}
\put(35,133){\makebox(0,0)[cc]{\footnotesize $0$}}
\put(-18,110){\makebox(0,0)[cc]{\footnotesize $-1$}}
\put(15,97){\makebox(0,0)[cc]{\footnotesize $0$}}
\put(35,80){\makebox(0,0)[cc]{\footnotesize $-3$}}
\put(-15,80){\makebox(0,0)[cc]{\footnotesize $0$}}
\put(18,80){\makebox(0,0)[cc]{\footnotesize $-1$}}
\put(15,63){\makebox(0,0)[cc]{\footnotesize $-1$}}
\put(-20,40){\makebox(0,0)[cc]{\footnotesize $-2$}}
\put(5,40){\makebox(0,0)[cc]{\footnotesize $-3$}}
\put(40,40){\makebox(0,0)[cc]{\footnotesize $-1$}}
\put(0,20){\makebox(0,0)[cc]{\footnotesize $-2$}}
\put(40,20){\makebox(0,0)[cc]{\footnotesize $-2$}}
\put(0,-10){\makebox(0,0)[cc]{$D_{\alpha}$}}
\end{picture}} 
\put(-50,0){\begin{picture}(0,160)
\thicklines
\qbezier(10,160)(10,160)(-10,140) 
\qbezier(-10,160)(-10,160)(-3,153)
\qbezier(10,140)(10,140)(3,147)
\qbezier(10,140)(10,140)(10,120) 
\qbezier(-10,140)(-10,140)(-10,120)
\qbezier(-10,120)(-10,120)(-10,100)
\qbezier(10,120)(10,120)(30,100)
\qbezier(10,160)(30,160)(30,140)
\qbezier(30,140)(30,140)(30,120)
\qbezier(30,120)(30,120)(10,100)
\put(20,110){\circle{6}}
\qbezier(-10,100)(-10,100)(10,80)
\qbezier(10,100)(10,100)(-10,80)
\put(0,90){\circle{6}}
\qbezier(-10,80)(-10,80)(10,60)
\qbezier(10,80)(10,80)(3,73) 
\qbezier(-10,60)(-10,60)(-3,67) 
\qbezier(30,100)(30,100)(30,60)
\qbezier(-10,60)(-10,60)(-10,20)
\qbezier(10,60)(10,60)(30,40)
\qbezier(10,40)(10,40)(30,60)
\put(20,50){\circle{6}}
\qbezier(10,40)(10,40)(30,20)
\qbezier(30,40)(30,40)(23,33)
\qbezier(10,20)(10,20)(17,27)
\qbezier(-10,20)(-10,20)(10,0)
\qbezier(-10,0)(-10,0)(10,20)
\put(0,10){\circle{6}}
\qbezier(10,0)(30,0)(30,20)
\qbezier(-10,0)(-30,0)(-30,20)
\qbezier(-30,20)(-30,20)(-30,140)
\qbezier(-30,140)(-30,160)(-10,160)
\put(-15,150){\makebox(0,0)[cc]{\footnotesize $\alpha$}}
\put(-15,70){\makebox(0,0)[cc]{\footnotesize $\gamma$}}
\put(35,30){\makebox(0,0)[cc]{\footnotesize $\delta$}}
\put(0,150){\vector(-1,-1){8}}
\put(-3,153){\vector(-1,1){5}}
\put(-10,130){\vector(0,-1){5}}
\put(10,130){\vector(0,1){5}}
\put(-3,67){\vector(-1,-1){5}}
\put(0,70){\vector(-1,1){8}}
\put(23,33){\vector(1,1){5}}
\put(20,30){\vector(-1,1){8}}
\put(-20,165){\makebox(0,0)[cc]{\footnotesize $2$}}
\put(-15,140){\makebox(0,0)[cc]{\footnotesize $1$}}
\put(18,140){\makebox(0,0)[cc]{\footnotesize $3$}}
\put(35,133){\makebox(0,0)[cc]{\footnotesize $0$}}
\put(15,97){\makebox(0,0)[cc]{\footnotesize $0$}}
\put(35,80){\makebox(0,0)[cc]{\footnotesize $3$}}
\put(-15,80){\makebox(0,0)[cc]{\footnotesize $0$}}
\put(18,80){\makebox(0,0)[cc]{\footnotesize $1$}}
\put(15,63){\makebox(0,0)[cc]{\footnotesize $1$}}
\put(-20,40){\makebox(0,0)[cc]{\footnotesize $2$}}
\put(5,40){\makebox(0,0)[cc]{\footnotesize $3$}}
\put(35,40){\makebox(0,0)[cc]{\footnotesize $1$}}
\put(0,20){\makebox(0,0)[cc]{\footnotesize $2$}}
\put(35,20){\makebox(0,0)[cc]{\footnotesize $2$}}
\put(0,-10){\makebox(0,0)[cc]{$D_{\beta}$}}
\end{picture}} 
\put(50,0){\begin{picture}(0,160)
\thicklines
\qbezier(10,160)(10,160)(-10,140) 
\qbezier(-10,160)(-10,160)(-3,153)
\qbezier(10,140)(10,140)(3,147)
\qbezier(10,140)(10,140)(-10,120) 
\qbezier(-10,140)(-10,140)(-3,133)
\qbezier(10,120)(10,120)(3,127)
\qbezier(-10,120)(-10,120)(-10,100)
\qbezier(10,120)(10,120)(30,100)
\qbezier(10,160)(30,160)(30,140)
\qbezier(30,140)(30,140)(30,120)
\qbezier(30,120)(30,120)(10,100)
\put(20,110){\circle{6}}
\qbezier(-10,100)(-10,100)(10,80)
\qbezier(10,100)(10,100)(-10,80)
\put(0,90){\circle{6}}
\qbezier(-10,80)(0,70)(10,80)
\qbezier(-10,60)(0,70)(10,60) 
\qbezier(30,100)(30,100)(30,60)
\qbezier(-10,60)(-10,60)(-10,20)
\qbezier(10,60)(10,60)(30,40)
\qbezier(10,40)(10,40)(30,60)
\put(20,50){\circle{6}}
\qbezier(10,40)(10,40)(30,20)
\qbezier(30,40)(30,40)(23,33)
\qbezier(10,20)(10,20)(17,27)
\qbezier(-10,20)(-10,20)(10,0)
\qbezier(-10,0)(-10,0)(10,20)
\put(0,10){\circle{6}}
\qbezier(10,0)(30,0)(30,20)
\qbezier(-10,0)(-30,0)(-30,20)
\qbezier(-30,20)(-30,20)(-30,140)
\qbezier(-30,140)(-30,160)(-10,160)
\put(-15,150){\makebox(0,0)[cc]{\footnotesize $\alpha$}}
\put(-15,130){\makebox(0,0)[cc]{\footnotesize $\beta$}}
\put(35,30){\makebox(0,0)[cc]{\footnotesize $\delta$}}
\put(0,150){\vector(-1,-1){8}}
\put(3,147){\vector(1,-1){5}}
\put(0,130){\vector(-1,-1){8}}
\put(3,127){\vector(1,-1){5}}
\put(0,75){\vector(-1,0){5}}
\put(0,65){\vector(1,0){5}}
\put(17,27){\vector(-1,-1){5}}
\put(20,30){\vector(1,-1){8}}
\put(-20,165){\makebox(0,0)[cc]{\footnotesize $0$}}
\put(-20,140){\makebox(0,0)[cc]{\footnotesize $-1$}}
\put(18,140){\makebox(0,0)[cc]{\footnotesize $1$}}
\put(15,123){\makebox(0,0)[cc]{\footnotesize $0$}}
\put(35,133){\makebox(0,0)[cc]{\footnotesize $0$}}
\put(-18,110){\makebox(0,0)[cc]{\footnotesize $0$}}
\put(15,97){\makebox(0,0)[cc]{\footnotesize $0$}}
\put(35,80){\makebox(0,0)[cc]{\footnotesize $0$}}
\put(-15,80){\makebox(0,0)[cc]{\footnotesize $0$}}
\put(18,80){\makebox(0,0)[cc]{\footnotesize $0$}}
\put(15,63){\makebox(0,0)[cc]{\footnotesize $1$}}
\put(-20,40){\makebox(0,0)[cc]{\footnotesize $1$}}
\put(5,40){\makebox(0,0)[cc]{\footnotesize $0$}}
\put(35,40){\makebox(0,0)[cc]{\footnotesize $1$}}
\put(5,20){\makebox(0,0)[cc]{\footnotesize $0$}}
\put(35,20){\makebox(0,0)[cc]{\footnotesize $1$}}
\put(0,-10){\makebox(0,0)[cc]{$D_{\gamma}$}}
\end{picture}} 
\put(150,0){\begin{picture}(0,160)
\thicklines
\qbezier(10,160)(10,160)(-10,140) 
\qbezier(-10,160)(-10,160)(-3,153)
\qbezier(10,140)(10,140)(3,147)
\qbezier(10,140)(10,140)(-10,120) 
\qbezier(-10,140)(-10,140)(-3,133)
\qbezier(10,120)(10,120)(3,127)
\qbezier(-10,120)(-10,120)(-10,100)
\qbezier(10,120)(10,120)(30,100)
\qbezier(10,160)(30,160)(30,140)
\qbezier(30,140)(30,140)(30,120)
\qbezier(30,120)(30,120)(10,100)
\put(20,110){\circle{6}}
\qbezier(-10,100)(-10,100)(10,80)
\qbezier(10,100)(10,100)(-10,80)
\put(0,90){\circle{6}}
\qbezier(-10,80)(-10,80)(10,60)
\qbezier(10,80)(10,80)(3,73) 
\qbezier(-10,60)(-10,60)(-3,67) 
\qbezier(30,100)(30,100)(30,60)
\qbezier(-10,60)(-10,60)(-10,20)
\qbezier(10,60)(10,60)(30,40)
\qbezier(10,40)(10,40)(30,60)
\put(20,50){\circle{6}}
\qbezier(10,40)(10,40)(10,20)
\qbezier(30,40)(30,40)(30,20)
\qbezier(-10,20)(-10,20)(10,0)
\qbezier(-10,0)(-10,0)(10,20)
\put(0,10){\circle{6}}
\qbezier(10,0)(30,0)(30,20)
\qbezier(-10,0)(-30,0)(-30,20)
\qbezier(-30,20)(-30,20)(-30,140)
\qbezier(-30,140)(-30,160)(-10,160)
\put(-15,150){\makebox(0,0)[cc]{\footnotesize $\alpha$}}
\put(-15,130){\makebox(0,0)[cc]{\footnotesize $\beta$}}
\put(-15,70){\makebox(0,0)[cc]{\footnotesize $\gamma$}}
\put(0,150){\vector(1,1){8}}
\put(-3,153){\vector(-1,1){5}}
\put(0,130){\vector(1,1){8}}
\put(-3,133){\vector(-1,1){5}}
\put(3,73){\vector(1,1){5}}
\put(0,70){\vector(1,-1){8}}
\put(10,30){\vector(0,1){5}}
\put(30,30){\vector(0,-1){5}}
\put(-20,165){\makebox(0,0)[cc]{\footnotesize $0$}}
\put(-15,140){\makebox(0,0)[cc]{\footnotesize $1$}}
\put(18,140){\makebox(0,0)[cc]{\footnotesize $-1$}}
\put(15,123){\makebox(0,0)[cc]{\footnotesize $0$}}
\put(35,133){\makebox(0,0)[cc]{\footnotesize $0$}}
\put(-18,110){\makebox(0,0)[cc]{\footnotesize $0$}}
\put(15,97){\makebox(0,0)[cc]{\footnotesize $0$}}
\put(35,80){\makebox(0,0)[cc]{\footnotesize $0$}}
\put(-15,80){\makebox(0,0)[cc]{\footnotesize $0$}}
\put(18,80){\makebox(0,0)[cc]{\footnotesize $0$}}
\put(15,63){\makebox(0,0)[cc]{\footnotesize $-1$}}
\put(-20,40){\makebox(0,0)[cc]{\footnotesize $-1$}}
\put(5,40){\makebox(0,0)[cc]{\footnotesize $0$}}
\put(40,40){\makebox(0,0)[cc]{\footnotesize $-1$}}
\put(0,-10){\makebox(0,0)[cc]{$D_{\delta}$}}
\end{picture}} 
\end{picture}
\caption{Oriented virtual diagrams $D_{\alpha}$, $D_{\beta}$, $D_{\gamma}$, and $D_{\delta}$.} \label{fig7}
\end{figure}
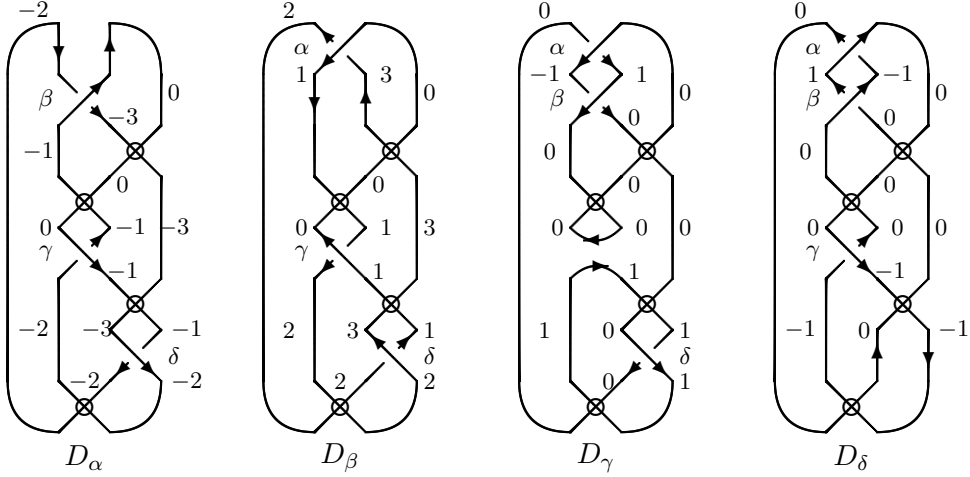

The result of calculations for oriented virtual knot diagrams $D_{\alpha}$, $D_{\beta}$, $D_{\gamma}$, and $D_{\delta}$, presented in Fig.~\ref{fig7}, is given in Table~\ref{t1}. 
\begin{table}[ht]
\caption{Values of $\operatorname{sgn}$, $\operatorname{Ind}$, and dwrithe for diagrams presented in Fig.~\ref{fig7}.}
\begin{tabular}{l | l | l|  l } \hline
 & Sign & index value & dwrihe \\ \hline 
$D_{\alpha}$ & $\operatorname{sgn}(\beta) = -1$ & $\operatorname{Ind}(\beta)=2$ & $\nabla J_{1}(D_{\alpha})= \operatorname{sgn}(\gamma) - \operatorname{sgn}(\delta) =  2$ \\ 
& $\operatorname{sgn}(\gamma)=1$ &  $\operatorname{Ind}(\gamma)=1$& $\nabla J_{2}(D_{\alpha}) = \operatorname{sgn} (\beta)=-1$ \\ 
& $\operatorname{sgn}(\delta) =-1$ & $\operatorname{Ind}(\delta)=-1$ &  \\ \hline  
$D_{\beta}$ & $\operatorname{sgn}(\alpha)=-1$ & $\operatorname{Ind}(\alpha)=-2$ & $\nabla J_{1}(D_{\beta})= \operatorname{sgn} (\delta) - \operatorname{sgn} (\gamma)=-2$ \\ 
& $\operatorname{sgn}(\gamma)=1$ & $\operatorname{Ind}(\gamma)=-1$ & $\nabla J_{2}(D_{\beta})= - \operatorname{sgn}(\alpha) = 1$ \\ 
& $\operatorname{sgn}(\delta) =-1$ &  $\operatorname{Ind}(\delta)=1$ &  \\ \hline  
$D_{\gamma}$ & $\operatorname{sgn}(\alpha)=1$ & $\operatorname{Ind}(\alpha)=-1$ &  $\nabla J_{1}(D_{\gamma})= \operatorname{sgn} (\beta) - \operatorname{sgn} (\alpha) = 0$ \\ 
& $\operatorname{sgn}(\beta)=1$ & $\operatorname{Ind}(\beta)=1$ & $\nabla J_{2}(D_{\gamma})=0$ \\ 
& $\operatorname{sgn}(\delta)=-1$ & $\operatorname{Ind}(\delta)=0$ &  \\ \hline  
$D_{\delta}$ & $\operatorname{sgn}(\alpha) =1$ & $\operatorname{Ind}(\alpha)=1$ & $\nabla J_{1}(D_{\delta})= \operatorname{sgn} (\alpha) - \operatorname{sgn} (\beta) = 0$ \\ 
& $ \operatorname{sgn}(\beta)=1$ & $\operatorname{Ind}(\beta)=-1$ & $\nabla J_{2}(D_{\delta}) =  0$ \\ 
& $\operatorname{sgn}(\gamma)=1$ & $\operatorname{Ind}(\gamma)=0$ &  \\ \hline  
\end{tabular} \label{t1}
\end{table} 
Basing on these calculations we obtain $L$-polynomials for diagram $D$. For $n = 1$ and $n=2$ we get 
$$
L^{1}_{D}(t,\ell)=2-t^{2}\ell^{2}-t^{-2}\ell^{2}, \qquad L^{2}_{D}(t,\ell)=2-t^{2}\ell-t^{-2}\ell.  
$$
For all $n \geq 3$ $L$-polynomials coincide with the affine polynomial:  
$$
L^{n}_{D}(t,\ell) = P_D(t) = 2-t^{2}-t^{-2}.
$$
This completes the Example~\ref{ex3.2}. 
}
\end{example}

The following result shows that $L$-polynomials are invariants of virtual knots. 

\begin{theorem} \label{thm-h}
Let $D$ be a diagram of an oriented virtual knot $K$. Then for any $n\in \mathbb{N}$, the polynomial $L^{n}_{D}(t,\ell)$ is an invariant for $K$. 
\end{theorem}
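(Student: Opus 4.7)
My approach is to verify that $L^n_D(t,\ell)$ is preserved under each of the classical Reidemeister moves (Fig.~\ref{fig1a}) and virtual Reidemeister moves (Fig.~\ref{fig1b}). Two facts do most of the work for free. First, the behavior of $\operatorname{sgn}(c)$ and $\operatorname{Ind}(c)$ under Reidemeister moves is already understood from the invariance of the affine index polynomial $P_D(t)$ of~\cite{kauffman2013affine,cheng2013polynomial}; in particular, a kink has index value $0$, and the two crossings produced by an R2 move have opposite signs and equal index values. Second, $\nabla J_n$ is a virtual knot invariant by Remark~\ref{rem-inv}, which I will apply not only to $D$ itself but also to each smoothed diagram $D_c$. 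The nontrivial content of the theorem is therefore controlling the quantities $\nabla J_n(D_c)$.

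For the purely virtual moves and the mixed (semi-virtual) move, the set of classical crossings is preserved via the obvious bijection, and signs and index values are unchanged. The key observation is that smoothing a classical crossing and then applying the virtual move produces the same diagram (up to virtual isotopy) as applying the virtual move first and then smoothing the corresponding crossing, so $\nabla J_n(D_c)=\nabla J_n(D'_c)$ by Remark~\ref{rem-inv}. For an R1 move adding a kink $c_0$, the Cheng-coloring calculation at a kink gives $\operatorname{Ind}(c_0)=0$, and smoothing $c_0$ against orientation undoes the R1 move, so $D'_{c_0}$ is virtually equivalent to $D'$ itself; the summand for $c_0$ therefore reduces to $\operatorname{sgn}(c_0)\bigl(\ell^{|\nabla J_n(D')|}-\ell^{|\nabla J_n(D')|}\bigr)=0$. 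For the remaining crossings, $D_c$ and $D'_c$ differ only by an R1 move, so Remark~\ref{rem-inv} again gives equality of dwrithes.

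For an R2 move introducing crossings $c_1,c_2$, smoothing either $c_i$ against orientation opens the bigon, turning the partner crossing into a kink, and the resulting diagram is virtually equivalent to the original diagram before R2. Hence $\nabla J_n(D'_{c_1})=\nabla J_n(D'_{c_2})=\nabla J_n(D')$, and the two summands cancel because $\operatorname{sgn}(c_1)+\operatorname{sgn}(c_2)=0$ while their $t$-exponents agree. For R3, the three crossings are in a sign- and index-preserving bijection across the move, so I only need to check that, crossing by crossing, the two smoothed diagrams on either side of the R3 move represent the same virtual knot; once this is established, Remark~\ref{rem-inv} finishes the job. I expect the R3 step to be the main obstacle: the algebraic pieces match trivially, but one has to work through the sub-cases (orientations and sign patterns) and verify in each that smoothing one of the three crossings reduces the triangular R3 region to a configuration relatable on the two sides by an R2 move together with virtual moves, so that the smoothed diagrams remain virtually equivalent.
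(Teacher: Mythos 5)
Your overall strategy (a case-by-case check of RI, RII, RIII and the semi-virtual move, with the purely virtual moves handled by commuting the smoothing with the move) is the same as the paper's, and your RI and SV treatments are essentially fine --- though note that for two of the four oriented kinks the smoothed diagram is the \emph{reverse} of $D$ rather than $D$ itself, so you need $\nabla J_{n}(D^-)=-\nabla J_{n}(D)$ from Lemma~\ref{lemma1} together with the absolute value in the exponent to conclude; your phrase ``virtually equivalent to $D'$ itself'' glosses over this, although the absolute value does rescue the conclusion.

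The genuine gap is that you never invoke the fact that $\nabla J_n$ is a \emph{flat} virtual knot invariant (Lemma~\ref{lemma2}), and without it two of your steps fail. First, in RII your claim that smoothing either new crossing yields a diagram ``virtually equivalent to the original diagram before R2'' is false: the against-orientation smoothing joins the two strands of the bigon and in general changes the knot type, so $\nabla J_{n}(D'_{c_i})\neq\nabla J_{n}(D')$. The cancellation only requires $\abs{\nabla J_{n}(D'_{c_1})}=\abs{\nabla J_{n}(D'_{c_2})}$, and the correct justification compares $D'_{c_1}$ and $D'_{c_2}$ \emph{to each other}: they are related by RI moves when the two strands are parallel, but by a \emph{crossing change} when they are antiparallel, which is exactly where flat invariance is needed. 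Second, in RIII your plan to verify that each pair of smoothed diagrams on the two sides ``represent the same virtual knot'' cannot be carried out for all three crossings: for one of the three (the crossing $a$ in Figs.~\ref{figR3case1}--\ref{figR3case2}) the smoothed diagrams $D_a$ and $D'_{a'}$ differ by two crossing changes and are \emph{not} virtually equivalent; only the other two pairs are related by RII moves. So the missing idea is precisely the paper's key lemma: $\nabla J_n$ descends to flat virtual knots, which lets you compare smoothed diagrams up to crossing change rather than up to virtual isotopy.
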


\begin{proof} 
Let $n$ be any positive integer. From the definition of $L$-polynomial it is clear that $L^{n}_{D}(t,\ell)$ is an invariant under virtual Reidemeister moves. Thus, it is enough to observe the behavior of $L^{n}_{D}(t,\ell)$ under classical Reidemeister moves RI, RII, RIII, and semi virtual move SV, see Fig.~\ref{fig-rei}. Let $D'$ be a diagram obtained from $D$ by applying one of these moves. We assume that in case of RI and RII, the number of classical crossings in $D'$ are more than the number of classical crossings in $D$. We have the following cases. 

\smallskip 

\underline{RI--move:} 
Let $D'$ be a diagram obtained from $D$ by move RI and $c'$ be the new crossing in $D'$. Fig.~\ref{figR1} presents all possible cases (i), (ii), (iii) and (iv), depending of orientation and crossing at $c'$.  Recall that $\operatorname{Ind} (c') = \operatorname{sgn} (c') (a-b-1)$, where labels $a$ and $b$ are presented in Fig.~\ref{fig3}.  In the case (i) it is shown in Fig.~\ref{figR1} that $b = a-1$, hence $\operatorname{Ind}(c')=0$. Similar considerations give that $\operatorname{Ind} (c') =0$ for all cases (i), (ii), (iii) and (iv).  
Also, it is clear from Fig.~\ref{figR1} that in cases (ii) and (iii) $D'_{c'}$ is equivalent to $D$, and in cases (i) and (iv) $D'_{c'}$ is equivalent to $D$ with the inverse orientation.  Hence, by  Lemma~\ref{lemma1} we get $\abs{\nabla J_{n}(D'_{c'})} = \abs{\nabla J_{n}(D)}$. By Remark~\ref{rem-inv}, $\nabla J_n (D') = \nabla J_n (D)$.  
\begin{figure}[!ht]
\centering 
\unitlength=0.44mm
\begin{picture}(0,65)(-10,5)
\thicklines
\put(0,40){\begin{picture}(0,00)
\put(-140,20){\makebox(0,0)[cc]{(i)}}
\put(-100,5){\makebox(0,0)[rc]{\footnotesize $a$}}
\put(-80,5){\makebox(0,0)[lc]{\footnotesize $a-1$}}
\qbezier(-130,10)(-130,10)(-130,30)
\put(-130,18){\vector(0,1){5}}
\put(-115,25){\makebox(0,0)[cc]{\footnotesize RI}}
\put(-115,20){\makebox(0,0)[cc]{$\longrightarrow$}}
\qbezier(-100,10)(-100,10)(-80,30)
\qbezier(-100,30)(-100,30)(-92,22) 
\qbezier(-80,10)(-80,10)(-88,18)
\qbezier(-80,30)(-70,20)(-80,10)
\put(-90,5){\makebox(0,0)[cc]{$c'$}}
\put(-95,25){\vector(-1,1){5}}
\put(-85,25){\vector(1,1){5}}
\put(-55,20){\makebox(0,0)[cc]{$\longrightarrow$}}
\qbezier(-40,30)(-30,20)(-20,30)
\qbezier(-40,10)(-30,20)(-20,10) 
\qbezier(-20,30)(-10,20)(-20,10)
\put(-25,25){\vector(1,1){5}}
\put(-35,15){\vector(-1,-1){5}}
\put(10,20){\makebox(0,0)[cc]{(ii)}}
\qbezier(20,10)(20,10)(20,30)
\put(20,22){\vector(0,-1){5}} 
\put(35,25){\makebox(0,0)[cc]{\footnotesize RI}}
\put(35,20){\makebox(0,0)[cc]{$\longrightarrow$}}
\qbezier(50,10)(50,10)(70,30)
\qbezier(50,30)(50,30)(58,22) 
\qbezier(70,10)(70,10)(62,18)
\qbezier(70,10)(80,20)(70,30)
\put(55,15){\vector(-1,-1){5}}
\put(65,15){\vector(1,-1){5}}
\put(60,5){\makebox(0,0)[cc]{$c'$}}
\put(90,20){\makebox(0,0)[cc]{$\longrightarrow$}}
\qbezier(100,30)(110,20)(120,30)
\qbezier(100,10)(110,20)(120,10) 
\qbezier(120,10)(130,20)(120,30)
\put(105,15){\vector(-1,-1){5}}
\put(115,25){\vector(1,1){5}}
\end{picture}}
\put(0,0){\begin{picture}(0,0) 
\put(-140,20){\makebox(0,0)[cc]{(iii)}}
\qbezier(-130,10)(-130,10)(-130,30)
\put(-130,18){\vector(0,1){5}}
\put(-115,25){\makebox(0,0)[cc]{\footnotesize RI}}
\put(-115,20){\makebox(0,0)[cc]{$\longrightarrow$}}
\qbezier(-100,10)(-100,10)(-92,18)
\qbezier(-80,30)(-80,30)(-88,22) 
\qbezier(-100,30)(-100,30)(-80,10)
\qbezier(-80,30)(-70,20)(-80,10)
\put(-95,25){\vector(-1,1){5}}
\put(-85,25){\vector(1,1){5}}
\put(-90,5){\makebox(0,0)[cc]{$c'$}}
\put(-55,20){\makebox(0,0)[cc]{$\longrightarrow$}}
\qbezier(-40,30)(-30,20)(-20,30)
\qbezier(-40,10)(-30,20)(-20,10) 
\qbezier(-20,30)(-10,20)(-20,10)
\put(-35,25){\vector(-1,1){5}}
\put(-25,15){\vector(1,-1){5}}
\put(10,20){\makebox(0,0)[cc]{(iv)}}
\qbezier(20,10)(20,10)(20,30)
\put(20,22){\vector(0,-1){5}} 
\put(35,25){\makebox(0,0)[cc]{\footnotesize RI}}
\put(35,20){\makebox(0,0)[cc]{$\longrightarrow$}}
\qbezier(50,30)(50,30)(70,10)
\qbezier(50,10)(50,10)(58,18) 
\qbezier(70,30)(70,30)(62,22)
\qbezier(70,10)(80,20)(70,30)
\put(55,15){\vector(-1,-1){5}}
\put(65,15){\vector(1,-1){5}}
\put(60,5){\makebox(0,0)[cc]{$c'$}}
\put(90,20){\makebox(0,0)[cc]{$\longrightarrow$}}
\qbezier(100,30)(110,20)(120,30)
\qbezier(100,10)(110,20)(120,10) 
\qbezier(120,10)(130,20)(120,30)
\put(105,25){\vector(-1,1){5}}
\put(115,15){\vector(1,-1){5}}
\end{picture}}
\end{picture}
\caption{RI-move and smoothing against orientation.} \label{figR1}
\end{figure}
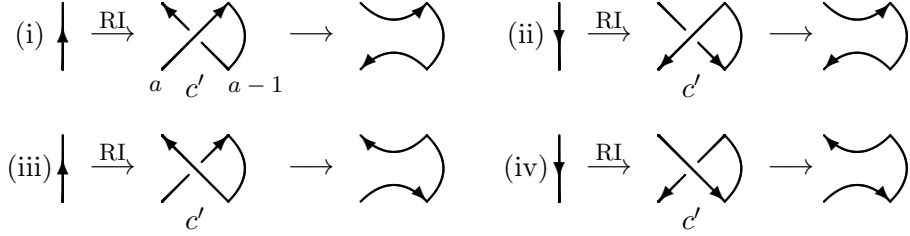

Therefore, 
\begin{eqnarray*}
L_{D'}^{n}(t,\ell) & = & \sum_{c \in C(D')} \operatorname{sgn}(c) \left(t^{\operatorname{Ind}(c)} \ell^{\abs{\nabla J_{n}(D'_{c})}} -\ell^{\abs{\nabla J_{n}(D')}}\right) \cr 
& = & \sum_{c \in C(D)} \operatorname{sgn}(c) \left( t^{\operatorname{Ind}(c)} \ell^{\abs{\nabla J_{n}(D_{c})}} -\ell^{\abs{\nabla J_{n}(D)}} \right) \cr 
& & +  \operatorname{sgn}(c') \left( t^{\operatorname{Ind}(c')}  \ell^{\abs{\nabla J_{n}(D'_{c'})}} - \ell^{\abs{\nabla J_{n}(D')}} \right)  \cr 
& = & L_{D}^{n}(t,\ell) +  \operatorname{sgn}(c') \left( t^{0}  \ell^{\abs{\nabla J_{n}(D)}} - \ell^{\abs{\nabla J_{n}(D)}} \right)= L_{D}^{n}(t,\ell).  
\end{eqnarray*}

\smallskip 

\underline{RII--move:}
Let $D'$ be a diagram obtained from $D$ by move RII and let $a$ and $b$ be new crossings in $D'$. Then $\operatorname{sgn}(a)=-\operatorname{sgn}(b)$ and $\operatorname{Ind}(a) = \operatorname{Ind}(b)$. Let $D'_a$ and $D'_b$ be diagrams, obtained from $D'$ by orientation reversing  smoothings at $a$ and $b$, respectively. We will consider two cases of moves RII. In Case (1), presented in Fig.~\ref{figR2a}, two arcs of $D$ have the same orientation; and in Case (2), presented in Fig.~\ref{figR2b}, two arcs of $D$ have different orientations.  
\begin{figure}[!ht]
\centering 
\unitlength=0.44mm
\begin{picture}(0,110)(-10,0)
\thicklines
\put(0,60){\begin{picture}(0,0)
\qbezier(-100,-20)(-100,-20)(-100,20)
\qbezier(-80,-20)(-80,-20)(-80,20)
\put(-100,-2){\vector(0,1){5}}
\put(-80,-2){\vector(0,1){5}}
\put(-90,-30){\makebox(0,0)[cc]{$D$}}
\put(-60,5){\makebox(0,0)[cc]{\footnotesize RII}}
\put(-60,0){\makebox(0,0)[cc]{$\longrightarrow$}}
\qbezier(-40,20)(-40,20)(-20,0)
\qbezier(-40,0)(-40,0)(-32,8) 
\qbezier(-20,20)(-20,20)(-28,12)
\qbezier(-40,-20)(-40,-20)(-20,0)
\qbezier(-40,0)(-40,0)(-32,-8) 
\qbezier(-20,-20)(-20,-20)(-28,-12)
\put(-35,15){\vector(-1,1){5}}
\put(-25,15){\vector(1,1){5}}
\put(-35,-5){\vector(-1,1){5}}
\put(-25,-5){\vector(1,1){5}}
\put(-40,10){\makebox(0,0)[cc]{$a$}}
\put(-40,-10){\makebox(0,0)[cc]{$b$}}
\put(-30,-30){\makebox(0,0)[cc]{$D'$}}
\put(0,30){\makebox(0,0)[cc]{$\longrightarrow$}}
\put(0,-30){\makebox(0,0)[cc]{$\longrightarrow$}}
\qbezier(20,50)(30,40)(40,50)
\qbezier(20,30)(30,40)(40,30) 
\qbezier(20,10)(20,10)(40,30) 
\qbezier(20,30)(20,30)(28,22) 
\qbezier(40,10)(40,10)(32,18)
\put(32,45){\vector(-1,0){5}}
\put(28,35){\vector(1,0){5}}
\put(30,0){\makebox(0,0)[cc]{$D'_a$}}
\put(60,35){\makebox(0,0)[cc]{RI}}
\put(60,30){\makebox(0,0)[cc]{$\longrightarrow$}}
\qbezier(80,50)(80,50)(80,40)
\qbezier(80,40)(90,30)(100,40)
\qbezier(100,40)(100,40)(100,50)
\qbezier(80,10)(80,10)(80,20)
\qbezier(80,20)(90,30)(100,20)
\qbezier(100,20)(100,20)(100,10)
\put(92,35){\vector(-1,0){5}}
\put(92,25){\vector(-1,0){5}} 
\put(90,0){\makebox(0,0)[cc]{$D''_a$}}
\end{picture}}
\put(0,0){\begin{picture}(0,0) 
\qbezier(20,50)(20,50)(40,30)
\qbezier(20,30)(20,30)(28,38) 
\qbezier(40,50)(40,50)(32,42) 
\qbezier(20,30)(30,20)(40,30) 
\qbezier(20,10)(30,20)(40,10)
\put(28,25){\vector(1,0){5}}
\put(32,15){\vector(-1,0){5}}
\put(30,0){\makebox(0,0)[cc]{$D'_b$}}
\put(60,35){\makebox(0,0)[cc]{RI}}
\put(60,30){\makebox(0,0)[cc]{$\longrightarrow$}}
\qbezier(80,50)(80,50)(80,40)
\qbezier(80,40)(90,30)(100,40)
\qbezier(100,40)(100,40)(100,50)
\qbezier(80,10)(80,10)(80,20)
\qbezier(80,20)(90,30)(100,20)
\qbezier(100,20)(100,20)(100,10)
\put(92,35){\vector(-1,0){5}}
\put(92,25){\vector(-1,0){5}} 
\put(90,0){\makebox(0,0)[cc]{$D''_b$}}
\end{picture}}
\end{picture}
\caption{RII-move and smoothing against orientation. Case (1).} \label{figR2a}
\end{figure}
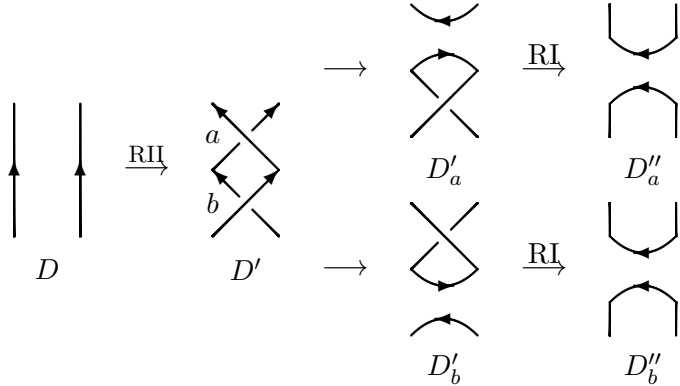

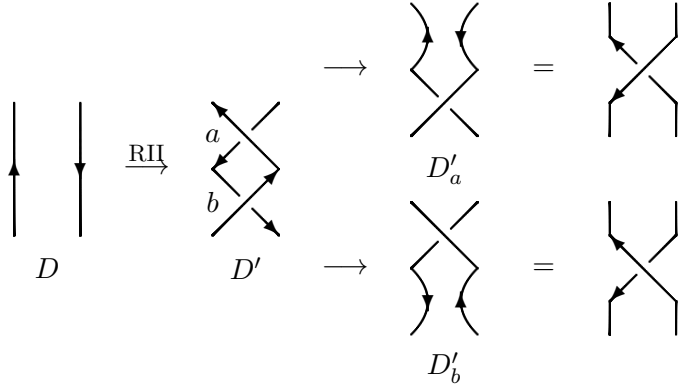
\begin{figure}[!ht]
\centering 
\unitlength=0.44mm
\begin{picture}(0,110)(-10,0)
\thicklines
\put(0,60){\begin{picture}(0,0)
\qbezier(-100,-20)(-100,-20)(-100,20)
\qbezier(-80,-20)(-80,-20)(-80,20)
\put(-100,-2){\vector(0,1){5}}
\put(-80,2){\vector(0,-1){5}}
\put(-90,-30){\makebox(0,0)[cc]{$D$}}
\put(-60,5){\makebox(0,0)[cc]{\footnotesize RII}}
\put(-60,0){\makebox(0,0)[cc]{$\longrightarrow$}}
\qbezier(-40,20)(-40,20)(-20,0)
\qbezier(-40,0)(-40,0)(-32,8) 
\qbezier(-20,20)(-20,20)(-28,12)
\qbezier(-40,-20)(-40,-20)(-20,0)
\qbezier(-40,0)(-40,0)(-32,-8) 
\qbezier(-20,-20)(-20,-20)(-28,-12)
\put(-35,15){\vector(-1,1){5}}
\put(-35,5){\vector(-1,-1){5}}
\put(-25,-5){\vector(1,1){5}}
\put(-25,-15){\vector(1,-1){5}}
\put(-40,10){\makebox(0,0)[cc]{$a$}}
\put(-40,-10){\makebox(0,0)[cc]{$b$}}
\put(-30,-30){\makebox(0,0)[cc]{$D'$}}
\put(0,30){\makebox(0,0)[cc]{$\longrightarrow$}}
\put(0,-30){\makebox(0,0)[cc]{$\longrightarrow$}}
\qbezier(20,50)(30,40)(20,30)
\qbezier(40,50)(30,40)(40,30) 
\qbezier(20,10)(20,10)(40,30) 
\qbezier(20,30)(20,30)(28,22) 
\qbezier(40,10)(40,10)(32,18)
\put(25,38){\vector(0,1){5}}
\put(35,42){\vector(0,-1){5}}
\put(30,0){\makebox(0,0)[cc]{$D'_a$}}
\put(60,30){\makebox(0,0)[cc]{$=$}}
\qbezier(80,50)(80,50)(80,40)
\qbezier(80,40)(80,40)(88,32)
\qbezier(100,20)(100,20)(92,28)
\qbezier(100,40)(100,40)(100,50)
\qbezier(80,10)(80,10)(80,20)
\qbezier(80,20)(80,20)(100,40)
\qbezier(100,20)(100,20)(100,10)
\put(85,25){\vector(-1,-1){5}}
\put(85,35){\vector(-1,1){5}} 
\end{picture}}
\put(0,0){\begin{picture}(0,0) 
\qbezier(20,50)(20,50)(40,30)
\qbezier(20,30)(20,30)(28,38) 
\qbezier(40,50)(40,50)(32,42) 
\qbezier(20,30)(30,20)(20,10) 
\qbezier(40,30)(30,20)(40,10)
\put(25,22){\vector(0,-1){5}}
\put(35,18){\vector(0,1){5}}
\put(30,0){\makebox(0,0)[cc]{$D'_b$}}
\put(60,30){\makebox(0,0)[cc]{$=$}}
\qbezier(80,50)(80,50)(80,40)
\qbezier(80,40)(80,40)(100,20)
\qbezier(100,40)(100,40)(100,50)
\qbezier(80,10)(80,10)(80,20)
\qbezier(80,20)(80,20)(88,28)
\qbezier(100,40)(100,40)(92,32)
\qbezier(100,20)(100,20)(100,10)
\put(85,35){\vector(-1,1){5}}
\put(85,25){\vector(-1,-1){5}} 
\end{picture}}
\end{picture}
\caption{RII-move and smoothing against orientation. Case (2).} \label{figR2b}
\end{figure}

In Case (1) diagram $D'_a$ is RI--equivalent to diagram $D''_a$, and diagram $D'_b$ is RI-equivalent to diagram $D''_b$. Since $n$-dwrithe is RI-invariant and diagrams $D''_a$ and $D''_b$ coincide, we get $\nabla J_n (D'_a) = \nabla J_n (D'_b)$.  

In Case (2) we get oriented diagrams $D'_a$ and $D'_b$ that have difference only in one crossing point. Since by Lemma~\ref{lemma1}, $n$-th writhe is a flat virtual knot invariant, we also get $\nabla J_n(D'_a) = \nabla J_n (D'_b)$. 

Therefore, in both cases we have  
\begin{eqnarray*}
L_{D'}^{n}(t,\ell) & = & L_{D}^{n}(t,\ell) + \operatorname{sgn}(a) \left(t^{\operatorname{Ind}(a)}\ell^{\abs{\nabla J_{n}(D'_{a})}} -\ell^{\abs{\nabla J_{n}(D')}} \right) \cr  
& & + \operatorname{sgn}(b) \left(t^{\operatorname{Ind}(b)} \ell^{\abs{\nabla J_{n}(D'_{b})}} - \ell^{\abs{\nabla J_{n}(D')}} \right) \cr 
& = & L_{D}^{n}(t,\ell) + \operatorname{sgn}(a)  \left(t^{\operatorname{Ind}(a)}\ell^{\abs{\nabla J_{n}(D'_{a})}}  - \ell^{\abs{\nabla J_{n}(D')}} \right) \cr 
& & - \operatorname{sgn} (a) \left(t^{\operatorname{Ind}(a)}\ell^{\abs{\nabla J_{n}(D'_{a})}}  - \ell^{\abs{\nabla J_{n}(D')}} \right) = L_D^n(t, \ell). 
\end{eqnarray*} 

For remaining cases of RII-moves with another over/under crossings the same arguments give the invariance of $L^n_D (t, \ell)$.   

\smallskip 

\underline{RIII--move:}
Let $D'$ be a diagram obtained form $D$ by RIII-move. Let $a$, $b$ and $c$ be crossings in $D$, and  $a'$, $b'$ and $c'$ be corresponding crossings of $D'$, as shown in~Fig.~\ref{figR3}.   
\begin{figure}[!ht]
\centering 
\unitlength=0.5mm
\begin{picture}(0,35)(0,5)
\thicklines
\qbezier(-50,10)(-50,10)(-30,30)
\qbezier(-50,30)(-50,30)(-42,22) 
\qbezier(-30,10)(-30,10)(-38,18)
\qbezier(-60,32)(-60,32)(-20,32)
\qbezier(-54,34)(-54,34)(-60,40)
\qbezier(-26,34)(-26,34)(-20,40)
\put(-54,27){\makebox(0,0)[cc]{\footnotesize $a$}}
\put(-26,27){\makebox(0,0)[cc]{\footnotesize $b$}}
\put(-40,12){\makebox(0,0)[cc]{\footnotesize $c$}}
\put(0,25){\makebox(0,0)[cc]{$\longleftrightarrow$}}
\qbezier(50,40)(50,40)(30,20)
\qbezier(50,20)(50,20)(42,28) 
\qbezier(30,40)(30,40)(38,32)
\qbezier(20,18)(20,18)(60,18)
\qbezier(20,10)(20,10)(26,16)
\qbezier(60,10)(60,10)(54,16)
\put(54,23){\makebox(0,0)[cc]{\footnotesize $a'$}}
\put(26,23){\makebox(0,0)[cc]{\footnotesize $b'$}}
\put(40,38){\makebox(0,0)[cc]{\footnotesize $c'$}}
\end{picture}
\caption{RIII move.} \label{figR3}
\end{figure}
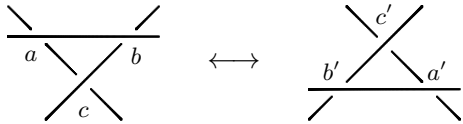

Because, of a freedom to choose orientation for each of three arcs, there are eight case for RIII-moves. We consider one of them in details. Assume that orientations of arcs of $D$ are as in Fig.~\ref{figR3case1} and of arcs of $D'$ are as in Fig.~\ref{figR3case2}.  
\begin{figure}[!ht]
\centering 
\unitlength=0.5mm
\begin{picture}(0,35)(-10,5)
\thicklines
\put(-60,0){\begin{picture}(0,30) 
\qbezier(-50,10)(-50,10)(-30,30)
\qbezier(-50,30)(-50,30)(-42,22) 
\qbezier(-30,10)(-30,10)(-38,18)
\qbezier(-60,32)(-60,32)(-20,32)
\qbezier(-54,34)(-54,34)(-60,40)
\qbezier(-26,34)(-26,34)(-20,40)
\put(-54,27){\makebox(0,0)[cc]{\footnotesize $a$}}
\put(-26,27){\makebox(0,0)[cc]{\footnotesize $b$}}
\put(-40,12){\makebox(0,0)[cc]{\footnotesize $c$}}
\put(-25,32){\vector(1,0){5}}
\put(-55,35){\vector(-1,1){5}}
\put(-25,35){\vector(1,1){5}}
\put(-40,5){\makebox(0,0)[cc]{$D$}}
\put(-10,25){\makebox(0,0)[cc]{$\longrightarrow$}}
\end{picture}}
\put(0,0){\begin{picture}(0,30) 
\qbezier(-50,10)(-50,10)(-30,30)
\qbezier(-30,10)(-30,10)(-38,18)
\qbezier(-60,32)(-50,32)(-42,22)
\qbezier(-40,32)(-40,32)(-20,32)
\qbezier(-40,32)(-50,32)(-60,40)
\qbezier(-26,34)(-26,34)(-20,40)
\put(-25,32){\vector(1,0){5}}
\put(-55,32){\vector(-1,0){5}}
\put(-40,5){\makebox(0,0)[cc]{$D_a$}}
\end{picture}}
\put(60,0){\begin{picture}(0,30) 
\qbezier(-50,10)(-50,10)(-35,25)
\qbezier(-50,30)(-50,30)(-42,22) 
\qbezier(-20,40)(-35,33)(-20, 32) 
\qbezier(-30,10)(-30,10)(-38,18)
\qbezier(-60,32)(-60,32)(-35,32)
\qbezier(-54,34)(-54,34)(-60,40)
\qbezier(-35,25)(-30,30)(-35,32)
\put(-20,32){\vector(1,0){5}}
\put(-55,32){\vector(-1,0){5}}
\put(-40,5){\makebox(0,0)[cc]{$D_b$}}
\end{picture}}
\put(120,0){\begin{picture}(0,30) 
\qbezier(-50,10)(-40,20)(-30,10)
\qbezier(-50,30)(-40,20)(-30,30) 
\qbezier(-60,32)(-60,32)(-20,32)
\qbezier(-54,34)(-54,34)(-60,40)
\qbezier(-26,34)(-26,34)(-20,40)
\put(-42,25){\vector(1,0){5}}
\put(-42,15){\vector(1,0){5}}
\put(-40,5){\makebox(0,0)[cc]{$D_c$}}
\end{picture}}
\end{picture}
\caption{RIII move and orientation reversing smoothing. Case (1).} \label{figR3case1}
\end{figure}
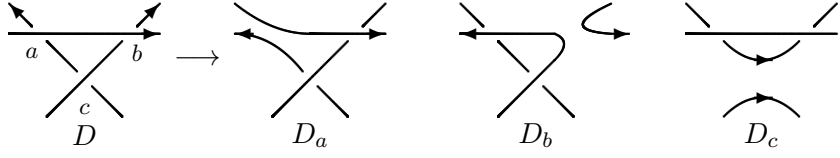

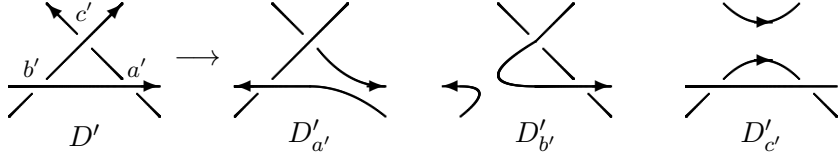
\begin{figure}[!ht]
\centering 
\unitlength=0.5mm
\begin{picture}(0,35)(10,5)
\thicklines
\put(-120,0){\begin{picture}(0,30) 
\qbezier(50,40)(50,40)(30,20)
\qbezier(50,20)(50,20)(42,28) 
\qbezier(30,40)(30,40)(38,32)
\qbezier(20,18)(20,18)(60,18)
\qbezier(20,10)(20,10)(26,16)
\qbezier(60,10)(60,10)(54,16)
\put(54,23){\makebox(0,0)[cc]{\footnotesize $a'$}}
\put(26,23){\makebox(0,0)[cc]{\footnotesize $b'$}}
\put(40,38){\makebox(0,0)[cc]{\footnotesize $c'$}}
\put(55,18){\vector(1,0){5}}
\put(45,35){\vector(1,1){5}}
\put(35,35){\vector(-1,1){5}}
\put(40,5){\makebox(0,0)[cc]{$D'$}}
\put(70,25){\makebox(0,0)[cc]{$\longrightarrow$}}
\end{picture}}
\put(-60,0){\begin{picture}(0,30) 
\qbezier(50,40)(50,40)(30,20)
\qbezier(60,18)(50,18)(42,28) 
\qbezier(30,40)(30,40)(38,32)
\qbezier(20,18)(20,18)(40,18)
\qbezier(40,18)(50,18)(60,10) 
\qbezier(20,10)(20,10)(26,16)
\put(55,18){\vector(1,0){5}}
\put(25,18){\vector(-1,0){5}}
\put(40,5){\makebox(0,0)[cc]{$D'_{a'}$}}
\end{picture}}
\put(0,0){\begin{picture}(0,30) 
\qbezier(50,40)(50,40)(40,30)
\qbezier(50,20)(50,20)(42,28) 
\qbezier(30,40)(30,40)(38,32)
\qbezier(40,18)(40,18)(60,18)
\qbezier(20,10)(30,18)(20,18)
\qbezier(40,30)(20,18)(40,18)
\qbezier(60,10)(60,10)(54,16)
\put(55,18){\vector(1,0){5}}
\put(20,18){\vector(-1,0){5}}
\put(40,5){\makebox(0,0)[cc]{$D'_{b'}$}}
\end{picture}}
\put(60,0){\begin{picture}(0,30) 
\qbezier(50,40)(40,30)(30,40)
\qbezier(50,20)(40,30)(30,20) 
\qbezier(20,18)(20,18)(60,18)
\qbezier(20,10)(20,10)(26,16)
\qbezier(60,10)(60,10)(54,16)
\put(38,35){\vector(1,0){5}}
\put(38,25){\vector(1,0){5}}
\put(40,5){\makebox(0,0)[cc]{$D'_{c'}$}}
\end{picture}}
\end{picture}
\caption{RIII move and orientation revising smoothing. Case (2).} \label{figR3case2}
\end{figure}

It is easy to see that $\operatorname{sgn} (a') = \operatorname{sgn}(a)$, $\operatorname{sgn}(b') = \operatorname{sgn}(b)$, and $\operatorname{sgn}(c') = \operatorname{sgn}(c)$. Moreover, direct calculations of Cheng coloring labels, by the rule presented in Fig.~\ref{fig3}, imply that $\operatorname{Ind} (a') = \operatorname{Ind} (a)$, $\operatorname{Ind} (b') = \operatorname{Ind} (b)$, and $\operatorname{Ind}(c') = \operatorname{Ind} (c)$. Therefore, $\nabla J_n (D') = \nabla J_n (D)$. 

Now we consider orientations reversing smoothings of $D$ at crossings $a$, $b$, $c$ and of $D'$ at crossings $a'$, $b'$, $c'$. By comparing Fig.~\ref{figR3case1} and Fig.~\ref{figR3case2} we see that $D'_{a'}$ and $D_a$ are equivalent under two crossing change operations, hence by Lemma~\ref{lemma1} we have $\nabla J_n (D'_{a'}) = \nabla J_n (D_a)$. Diagram $D'_{b'}$ is equivalent to $D_b$ under two RII-moves, and analogously, diagram $D'_{c'}$ is equivalent to $D_c$ under two RII moves. Therefore, by Remark~\ref{rem-inv} we have  $\nabla J_n (D'_{b'}) = \nabla J_n (D_b)$ and $\nabla J_n (D'_{c'}) = \nabla J_n (D_c)$. 

By the above discussion, 
$$
\begin{gathered}
\operatorname{sgn}(a') \left(t^{\operatorname{Ind}(a')}\ell^{\abs{\nabla J_{n}(D'_{a'})}} -\ell^{\abs{\nabla J_{n}(D')}} \right) + \operatorname{sgn}(b') \left(t^{\operatorname{Ind}(b')}\ell^{\abs{\nabla J_{n}(D'_{b'})}} -\ell^{\abs{\nabla J_{n}(D')}} \right) \\ 
+ \operatorname{sgn}(c') \left(t^{\operatorname{Ind}(c')}\ell^{\abs{\nabla J_{n}(D'_{c'})}} -\ell^{\abs{\nabla J_{n}(D')}} \right) \\ 
=  \operatorname{sgn}(a) \left(t^{\operatorname{Ind}(a)}\ell^{\abs{\nabla J_{n}(D_{a})}} -\ell^{\abs{\nabla J_{n}(D)}} \right) 
+ \operatorname{sgn}(b) \left(t^{\operatorname{Ind}(b)}\ell^{\abs{\nabla J_{n}(D_{b})}} -\ell^{\abs{\nabla J_{n}(D)}} \right) \\ + \operatorname{sgn}(c) \left(t^{\operatorname{Ind}(c)}\ell^{\abs{\nabla J_{n}(D_{c})}} -\ell^{\abs{\nabla J_{n}(D)}} \right). 
\end{gathered} 
$$ 
Therefore, $L_{D'}^{n}(t,\ell) = L_{D}^{n}(t,\ell)$. 
For other types of RIII-moves the result follows by analogous considerations. 

\smallskip 

\underline{SV--move:} 
Let $D'$ be a diagram, obtained from $D$ by SV-move applied at classical crossing $c$, and $c'$ be the correspond crossing of $D'$. We will discuss two cases depending of arc orientations. 
\begin{figure}[!ht]
\centering 
\unitlength=0.5mm
\begin{picture}(0,45)(0,0)
\thicklines
\qbezier(-120,30)(-120,30)(-80,30)
\qbezier(-110,10)(-100,20)(-90,10)
\qbezier(-110,30)(-100,20)(-90,30)
\qbezier(-120,40)(-120,40)(-110,30)
\qbezier(-80,40)(-80,40)(-90,30)
\put(-110,30){\circle{4}}
\put(-90,30){\circle{4}}
\put(-98,25){\vector(-1,0){5}}
\put(-102,15){\vector(1,0){5}}
\put(-100,0){\makebox(0,0)[cc]{$D_c$}}
\put(-70,25){\makebox(0,0)[cc]{$\longleftarrow$}}
\qbezier(-50,10)(-50,10)(-42,18)
\qbezier(-20,40)(-20,40)(-38,22)
\qbezier(-60,40)(-60,40)(-30,10) 
\qbezier(-60,30)(-60,30)(-20,30)
\put(-50,30){\circle{4}}
\put(-30,30){\circle{4}}
\put(-40,20){\vector(-1,1){6}}
\put(-37,23){\vector(1,1){3}}
\put(-40,12){\makebox(0,0)[cc]{\footnotesize $c$}}
\put(-40,0){\makebox(0,0)[cc]{$D$}}
\put(0,25){\makebox(0,0)[cc]{$\longleftrightarrow$}}
\qbezier(20,20)(20,20)(60,20)
\qbezier(30,40)(30,40)(60,10)
\qbezier(20,10)(20,10)(38,28) 
\qbezier(50,40)(50,40)(42,32)
\put(30,20){\circle{4}}
\put(50,20){\circle{4}}
\put(35,35){\vector(-1,1){5}}
\put(45,35){\vector(1,1){5}}
\put(40,38){\makebox(0,0)[cc]{\footnotesize $c'$}}
\put(40,0){\makebox(0,0)[cc]{$D'$}}
\put(70,25){\makebox(0,0)[cc]{$\longrightarrow$}}
\put(110,20){\circle{4}}
\put(90,20){\circle{4}}
\put(102,35){\vector(-1,0){5}}
\put(98,25){\vector(1,0){5}}
\qbezier(80,20)(80,20)(120,20)
\qbezier(90,20)(100,30)(110,20)
\qbezier(90,40)(100,30)(110,40)
\qbezier(80,10)(80,10)(90,20)
\qbezier(120,10)(120,10)(110,20)
\put(100,0){\makebox(0,0)[cc]{$D'_{c'}$}}
\end{picture}
\caption{SV-move. Case (1).} \label{figSVcase1}
\end{figure}
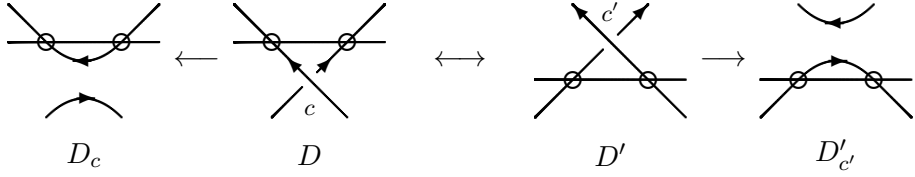

In Case (1), presented in Fig.~\ref{figSVcase1},  we see that $\operatorname{sgn} (c') = \operatorname{sgn} (c)$ and $\operatorname{Ind} (c') = \operatorname{Ind}(c)$. Hence $\nabla J_n (D') = \nabla J_n (D)$. As we see from Fig.~\ref{figSVcase1}, diagrams $D'_{c'}$ and $D_c$ are equivalent under two VRII-moves. Then, by Remark~\ref{rem-inv} we have $\nabla J_n (D'_{c'}) = \nabla J_n (D_c)$.  

\begin{figure}[!ht]
\centering 
\unitlength=0.5mm
\begin{picture}(0,45)(0,0)
\thicklines
\qbezier(-120,30)(-120,30)(-80,30)
\qbezier(-110,10)(-100,20)(-110,30)
\qbezier(-90,10)(-100,20)(-90,30)
\qbezier(-120,40)(-120,40)(-110,30)
\qbezier(-80,40)(-80,40)(-90,30)
\put(-110,30){\circle{4}}
\put(-90,30){\circle{4}}
\put(-105,18){\vector(0,1){5}}
\put(-95,18){\vector(0,1){5}}
\put(-100,0){\makebox(0,0)[cc]{$D_c$}}
\put(-70,25){\makebox(0,0)[cc]{$\longleftarrow$}}
\qbezier(-50,10)(-50,10)(-42,18)
\qbezier(-20,40)(-20,40)(-38,22)
\qbezier(-60,40)(-60,40)(-30,10) 
\qbezier(-60,30)(-60,30)(-20,30)
\put(-50,30){\circle{4}}
\put(-30,30){\circle{4}}
\put(-40,20){\vector(-1,1){6}}
\put(-43,17){\vector(-1,-1){3}}
\put(-40,12){\makebox(0,0)[cc]{\footnotesize $c$}}
\put(-40,0){\makebox(0,0)[cc]{$D$}}
\put(0,25){\makebox(0,0)[cc]{$\longleftrightarrow$}}
\qbezier(20,20)(20,20)(60,20)
\qbezier(30,40)(30,40)(60,10)
\qbezier(20,10)(20,10)(38,28) 
\qbezier(50,40)(50,40)(42,32)
\put(30,20){\circle{4}}
\put(50,20){\circle{4}}
\put(40,30){\vector(-1,1){6}}
\put(37,27){\vector(-1,-1){3}}
\put(40,38){\makebox(0,0)[cc]{\footnotesize $c'$}}
\put(40,0){\makebox(0,0)[cc]{$D'$}}
\put(70,25){\makebox(0,0)[cc]{$\longrightarrow$}}
\put(110,20){\circle{4}}
\put(90,20){\circle{4}}
\put(95,28){\vector(0,1){5}}
\put(105,32){\vector(0,-1){5}}
\qbezier(80,20)(80,20)(120,20)
\qbezier(90,20)(100,30)(90,40)
\qbezier(110,20)(100,30)(110,40)
\qbezier(80,10)(80,10)(90,20)
\qbezier(120,10)(120,10)(110,20)
\put(100,0){\makebox(0,0)[cc]{$D'_{c'}$}}
\end{picture}
\caption{SV-move. Case (2).} \label{figSVcase2}
\end{figure}
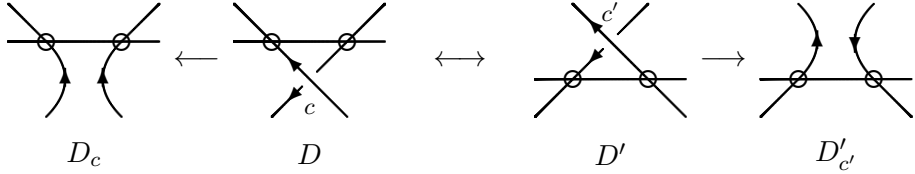

Analogously, in Case (2), presented in Fig.~\ref{figSVcase2}, we see that $\operatorname{sgn} (c') = \operatorname{sgn} (c)$ and $\operatorname{Ind} (c') = \operatorname{Ind}(c)$. Hence $\nabla J_n (D') = \nabla J_n (D)$. As we see from Fig.~\ref{figSVcase2}, diagrams $D'_{c'}$ and $D_c$ are equivalent, so $\nabla J_n (D'_{c'}) = \nabla J_n (D_c)$. 

Thus, in Case (1) as well as in Case (2) we get 
$$
 \operatorname{sgn}(c') \left(t^{\operatorname{Ind}(c')}\ell^{\abs{\nabla J_{n}(D'_{c'})}} -\ell^{\abs{\nabla J_{n}(D')}} \right)  =  \operatorname{sgn}(c) \left(t^{\operatorname{Ind}(c)}\ell^{\abs{\nabla J_{n}(D_{c})}} -\ell^{\abs{\nabla J_{n}(D)}} \right). 
$$ 
Therefore, $L^{n}_{D'}(t,\ell) = L^{n}_{D}(t,\ell)$. Applying analogous arguments for other types of SV-moves, we conclude that $L^{n}_{D}(t,\ell)$ is invariant under SV-moves. 

All considered cases of moves RI, RII, RIII, and SV give that $L^{n}_{D}(t,\ell)$ is a virtual knot invariant.
\end{proof} 

\begin{proposition} \label{prop3.4}
The $L$-polynomials and the affine index polynomial coincide on classical knots. 
\end{proposition}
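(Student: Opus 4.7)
The plan is to reduce $L^n_D(t,\ell)$ to $P_D(t)$ by showing that the $\ell$-factors appearing in the definition of $L^n_D$ all collapse to $1$ whenever $D$ is a classical diagram. Writing
\[
L^n_D(t,\ell) = \sum_{c \in C(D)} \operatorname{sgn}(c)\bigl(t^{\operatorname{Ind}(c)} \ell^{\abs{\nabla J_n(D_c)}} - \ell^{\abs{\nabla J_n(D)}}\bigr),
\]
it suffices to establish the two vanishings $\nabla J_n(D)=0$ and $\nabla J_n(D_c)=0$ for every $c\in C(D)$ and every $n\in\mathbb{N}$. Once these hold, each summand simplifies to $\operatorname{sgn}(c)(t^{\operatorname{Ind}(c)}-1)$, which is exactly the $c$-term of $P_D(t)$.

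The first vanishing $\nabla J_n(D)=0$ is the observation recorded immediately after Remark~\ref{rem2.2}: on a classical knot diagram the index of every crossing is zero, so all $J_{\pm n}$ vanish. For the second vanishing I would argue that the smoothing rule of Fig.~\ref{fig5} is purely local and introduces no virtual crossings, so starting from a classical $D$ the smoothed diagram $D_c$ is again planar. Because the Cheng coloring of a planar diagram is globally consistent around every bounded region, the index value at each classical crossing of $D_c$ must vanish, and therefore $J_{\pm n}(D_c) = 0$ and $\nabla J_n(D_c) = 0$.

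The one point I expect to need a little care is the component/orientation bookkeeping for $D_c$: the smoothing against orientation may reverse the orientation on a portion of the strand and could even split $D$ into two classical components, so one has to verify that $\nabla J_n$ is still meaningful and still zero in that situation. Planarity is preserved in all cases, however, and that is all that is actually used. With both vanishings in place, the identity $L^n_D(t,\ell)=P_D(t)$ is immediate by direct substitution into the definition.
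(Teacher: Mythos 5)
Your argument is correct and takes essentially the same route as the paper: both reduce the claim to the vanishing of $\nabla J_n(D)$ and $\nabla J_n(D_c)$ for all $c$, which holds because $D$ and every $D_c$ are classical diagrams, on which the dwrithe (equivalently, every index value) is zero. The component issue you flag never actually arises: smoothing a knot diagram \emph{against} the orientation always yields a single-component diagram (it is the oriented smoothing that disconnects), so each $D_c$ is again a classical knot diagram and no separate link-case verification is needed.
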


\begin{proof} 
Let $D$ be a diagram of a classical knot $K$ and $n \in \mathbb{N}$. Since dwrithe is a flat virtual knot invariant, $\nabla J_{n}(D)$ and $\nabla J_{n}(D_{c})$ equal to zero for any $c \in C(D)$. Thus $L^{n}_{D}(t,\ell)=P_{D}(t)$. 
\end{proof} 

\begin{remark} {\rm 
For $\ell=1$ we get $L^{n}_{K}(t,1)=P_{K}(t)$. Thus the affine index polynomial becomes a special case of the $L$-polynomial. 
}  
 \end{remark}

\begin{example}  \label{example3.6} {\rm 
Consider oriented virtual knot $K$ and its mirror image $K^*$ presented by the diagrams in Fig.~\ref{fig13}. 
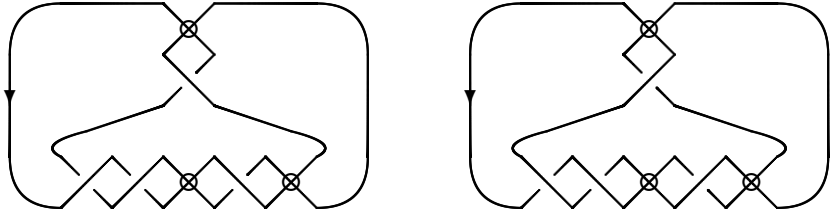
\begin{figure}[!ht]
\centering 
\unitlength=0.34mm
\begin{picture}(0,85)
\put(-90,0){\begin{picture}(0,80) 
\thicklines
\qbezier(-50,0)(-50,0)(-30,20)
\qbezier(-50,20)(-50,20)(-43,13) 
\qbezier(-30,0)(-30,0)(-37,7)
\qbezier(-30,0)(-30,0)(-10,20)
\qbezier(-30,20)(-30,20)(-23,13) 
\qbezier(-10,0)(-10,0)(-17,7)
\qbezier(-10,0)(-10,0)(10,20)
\qbezier(-10,20)(-10,20)(10,0) 
\put(0,10){\circle{6}}
\qbezier(10,20)(10,20)(30,0)
\qbezier(10,0)(10,0)(17,7) 
\qbezier(23,13)(23,13)(30,20)
\qbezier(30,0)(30,0)(50,20)
\qbezier(30,20)(30,20)(50,0) 
\put(40,10){\circle{6}}
\qbezier(-10,40)(-10,40)(-3,47)
\qbezier(3,53)(3,53)(10,60) 
\qbezier(-10,60)(-10,60)(10,40)
\qbezier(-10,60)(-10,60)(10,80)
\qbezier(-10,80)(-10,80)(10,60) 
\put(0,70){\circle{6}}
\qbezier(-50,20)(-60,25)(-40,30)
\qbezier(-40,30)(-10,40)(-10,40)
\qbezier(50,20)(60,25)(40,30)
\qbezier(40,30)(10,40)(10,40)
\qbezier(-50,0)(-70,0)(-70,20)
\qbezier(-70,20)(-70,20)(-70,60)
\qbezier(-70,60)(-70,80)(-50,80) 
\qbezier(-50,80)(-50,80)(-10,80) 
\qbezier(50,0)(70,0)(70,20)
\qbezier(70,20)(70,20)(70,60)
\qbezier(70,60)(70,80)(50,80) 
\qbezier(50,80)(50,80)(10,80) 
\put(-70,45){\vector(0,-1){5}}
\end{picture}}
\put(90,0){\begin{picture}(0,80) 
\thicklines
\qbezier(-50,20)(-50,20)(-30,0)
\qbezier(-50,0)(-50,0)(-43,7) 
\qbezier(-30,20)(-30,20)(-37,13)
\qbezier(-30,20)(-30,20)(-10,0)
\qbezier(-30,0)(-30,0)(-23,7) 
\qbezier(-10,20)(-10,20)(-17,13)
\qbezier(-10,0)(-10,0)(10,20)
\qbezier(-10,20)(-10,20)(10,0) 
\put(0,10){\circle{6}}
\qbezier(10,0)(10,0)(30,20)
\qbezier(10,20)(10,20)(17,13) 
\qbezier(23,7)(23,7)(30,0)
\qbezier(30,0)(30,0)(50,20)
\qbezier(30,20)(30,20)(50,0) 
\put(40,10){\circle{6}}
\qbezier(-10,60)(-10,60)(-3,53)
\qbezier(3,47)(3,47)(10,40) 
\qbezier(-10,40)(-10,40)(10,60)
\qbezier(-10,60)(-10,60)(10,80)
\qbezier(-10,80)(-10,80)(10,60) 
\put(0,70){\circle{6}}
\qbezier(-50,20)(-60,25)(-40,30)
\qbezier(-40,30)(-10,40)(-10,40)
\qbezier(50,20)(60,25)(40,30)
\qbezier(40,30)(10,40)(10,40)
\qbezier(-50,0)(-70,0)(-70,20)
\qbezier(-70,20)(-70,20)(-70,60)
\qbezier(-70,60)(-70,80)(-50,80) 
\qbezier(-50,80)(-50,80)(-10,80) 
\qbezier(50,0)(70,0)(70,20)
\qbezier(70,20)(70,20)(70,60)
\qbezier(70,60)(70,80)(50,80) 
\qbezier(50,80)(50,80)(10,80) 
\put(-70,45){\vector(0,-1){5}}
\end{picture}}
\end{picture}
\caption{Oriented virtual knots $K$ (on left) and $K^*$ (on right).} \label{fig13}
\end{figure}
The  affine index polynomial and the writhe polynomial are trivial for these knots, while $L$-polynomials are non trivial:  
$$
L^{1}_{K}(t,\ell) = t^{-1} \ell^{2}+t \ell^{2}-t^{-1}-t, \qquad   L^{2}_{K}(t,\ell)  =   t^{-1}\ell + t\ell - t^{-1} - t, 
$$
and 
$$
L^{1}_{K^*}(t,\ell) = -t^{-1}\ell^{2} - t\ell^{2} + t^{-1} + t, \qquad L^{2}_{K^*}(t,\ell) = -t^{-1}\ell - t\ell + t^{-1} + t. 
$$
Therefore, knots $K$ and $K^*$ are both non-trivial and non-equivalent to each other. 
}
\end{example}


\section{Behavior under reflection and orientation reversing}
\label{inv}

Now we describe behavior of $L$-polynomial under reflection of a diagram and under changing its orientation. 

 \begin{theorem} \label{th-mirror}
Let $D$ be an oriented virtual knot diagram. Denote by $D^*$ its mirror image and by $D^-$ its reverse. Then for any $n\in \mathbb{N}$ we have 
$$
L^{n}_{D^*}(t,\ell) = -L^{n}_{D}(t^{-1},\ell) \qquad \text{and} \qquad   L^{n}_{D^-}(t,\ell) = L^{n}_{D}(t^{-1},\ell). 
$$
\end{theorem}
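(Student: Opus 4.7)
The plan is to reduce both identities to the term-by-term data computed in the proof of Lemma~\ref{lemma1}, combined with an analysis of how orientation-reversing smoothing interacts with mirror reflection and orientation reversal. Fix $n \in \mathbb{N}$ and let $c \in C(D)$, writing $c^*$ and $c^-$ for the corresponding crossings of $D^*$ and $D^-$.

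First, I will collect the local data extracted from Fig.~\ref{fig4new}: $\operatorname{sgn}(c^*) = -\operatorname{sgn}(c)$, $\operatorname{sgn}(c^-) = \operatorname{sgn}(c)$, and $\operatorname{Ind}(c^*) = \operatorname{Ind}(c^-) = -\operatorname{Ind}(c)$. Lemma~\ref{lemma1} further gives $\nabla J_n(D^*) = \nabla J_n(D)$ and $\nabla J_n(D^-) = -\nabla J_n(D)$, so that in particular $|\nabla J_n(D^*)| = |\nabla J_n(D^-)| = |\nabla J_n(D)|$.

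Next comes the key compatibility step, which I expect to be the main obstacle to state cleanly: the orientation-reversing smoothing commutes with the operations $D \mapsto D^*$ and $D \mapsto D^-$, in the sense that
\[
(D^*)_{c^*} \;=\; (D_c)^* \qquad \text{and} \qquad (D^-)_{c^-} \;=\; (D_c)^-.
\]
For the mirror case this is clear because taking the mirror image leaves arc orientations unchanged and only switches over/under information at each classical crossing; the smoothing in Fig.~\ref{fig5} is determined solely by arc orientations at $c$, and the remaining crossings are all switched in either order. For the reverse case, reversing all arc orientations applies the smoothing rule of Fig.~\ref{fig5} to a crossing whose two input arcs now enter from the opposite ends, which produces the same underlying planar resolution with all orientations reversed; since no classical crossings are switched, this agrees with $(D_c)^-$. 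Applying Lemma~\ref{lemma1} to these smoothed diagrams then yields
\[
|\nabla J_n((D^*)_{c^*})| \;=\; |\nabla J_n(D_c)| \qquad \text{and} \qquad |\nabla J_n((D^-)_{c^-})| \;=\; |\nabla J_n(D_c)|.
\]

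Finally, the two identities fall out by direct substitution into Definition~\ref{h}. For the mirror,
\[
L^n_{D^*}(t,\ell) \;=\; \sum_{c \in C(D)} \bigl(-\operatorname{sgn}(c)\bigr)\Bigl(t^{-\operatorname{Ind}(c)}\ell^{|\nabla J_n(D_c)|} - \ell^{|\nabla J_n(D)|}\Bigr) \;=\; -L^n_D(t^{-1},\ell),
\]
and for the reverse,
\[
L^n_{D^-}(t,\ell) \;=\; \sum_{c \in C(D)} \operatorname{sgn}(c)\Bigl(t^{-\operatorname{Ind}(c)}\ell^{|\nabla J_n(D_c)|} - \ell^{|\nabla J_n(D)|}\Bigr) \;=\; L^n_D(t^{-1},\ell),
\]
as claimed. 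The only real content beyond Lemma~\ref{lemma1} is the commutativity of smoothing with reflection and reversal; once that is recorded, the rest is bookkeeping of signs and exponents.
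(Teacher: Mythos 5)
Your overall strategy is the same as the paper's: reduce to the local data $\operatorname{sgn}(c^*)=-\operatorname{sgn}(c)$, $\operatorname{sgn}(c^-)=\operatorname{sgn}(c)$, $\operatorname{Ind}(c^*)=\operatorname{Ind}(c^-)=-\operatorname{Ind}(c)$, identify the smoothed diagrams, and substitute. The final computation is correct. However, the step you yourself single out as ``the only real content'' is stated and justified incorrectly. The orientation of the smoothing in Fig.~\ref{fig5} is \emph{not} determined solely by the arc orientations at $c$: the two cases in Fig.~\ref{fig5} assign \emph{opposite} orientations to the smoothed arcs for a positive versus a negative crossing with identical arc orientations. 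Consequently your claimed identity $(D^*)_{c^*}=(D_c)^*$ is false as an identity of oriented diagrams: switching the crossing at $c$ flips which of the two arcs of $D$ gets its orientation reversed by the smoothing, so in fact $(D^*)_{c^*}$ is $(D_c)^*$ with the orientation reversed. Likewise $(D^-)_{c^-}$ is not $(D_c)^-$; tracing the induced orientations (this is exactly the content of the paper's Fig.~\ref{fig15new}) shows $(D^-)_{c^-}=D_c$ on the nose, while $(D^*)_{c^*}$ agrees with $(D_c)^-$ up to mirror image.

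You are rescued only by an accident of the statement being proved: since $L^n_D$ involves $\ell^{\abs{\nabla J_n(\cdot)}}$ and Lemma~\ref{lemma1} gives $\abs{\nabla J_n(E^-)}=\abs{\nabla J_n(E^*)}=\abs{\nabla J_n(E)}$ for every diagram $E$, any identification of $(D^*)_{c^*}$ and $(D^-)_{c^-}$ with $D_c$ up to reversal and mirroring yields the needed equality $\abs{\nabla J_n((D^{\pm *})_{c^{\pm *}})}=\abs{\nabla J_n(D_c)}$, so your two displayed sums are still valid. But the commutation statements as you wrote them, with the justification you gave, would lead to a wrong sign of $\nabla J_n$ itself (you would conclude $\nabla J_n((D^*)_{c^*})=+\nabla J_n(D_c)$, whereas the correct sign is $-$), and this distinction matters the moment one passes to the $F$-polynomials of Section~\ref{F-poly}, where the exponent is $\nabla J_n(D_c)$ without absolute value. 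You should replace the appeal to ``smoothing depends only on arc orientations'' by the explicit comparison of Fig.~\ref{fig5} with the smoothings of $c^-$ and $c^*$, as the paper does.
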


\begin{proof} Let $c$ be a classical crossing in $D$. Denote by $c^*$ and $c^-$ be the corresponding crossings in diagrams $D^*$ and $D^-$, respectively. We already mentioned in the proof of Lemma~\ref{lemma1} that  $\operatorname{sgn}(c^-) = \operatorname{sgn}(c )= - \operatorname{sgn} (c^*)$ and $\operatorname{Ind}(c^*) = \operatorname{Ind}(c^-)= -\operatorname{Ind}(c)$. 

Comparing Fig.~\ref{fig5} and Fig.~\ref{fig15new} we see that $D^-_{c^-}$ is equivalent to $D_{c}$, i.~e. $D^-_{c^-} = D_c$,  and $D^{*}_{c^*}$ is inverse of $D_{c}$, i.~e. $D^{*}_{c^*} = (D_c)^-$.    
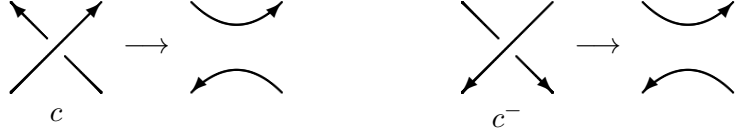
\begin{figure}[!ht]
\centering 
\unitlength=0.6mm
\begin{picture}(0,25)(0,5)
\thicklines
\qbezier(-80,10)(-80,10)(-60,30)
\qbezier(-80,30)(-80,30)(-72,22) 
\qbezier(-60,10)(-60,10)(-68,18)
\put(-75,25){\vector(-1,1){5}}
\put(-65,25){\vector(1,1){5}}
\put(-50,20){\makebox(0,0)[cc]{$\longrightarrow$}}
\qbezier(-40,30)(-30,20)(-20,30)
\qbezier(-40,10)(-30,20)(-20,10) 
\put(-35,15){\vector(-1,-1){5}}
\put(-25,25){\vector(1,1){5}}
\put(-70,5){\makebox(0,0)[cc]{$c$}}
\qbezier(20,10)(20,10)(40,30)
\qbezier(20,30)(20,30)(28,22) 
\qbezier(40,10)(40,10)(32,18)
\put(25,15){\vector(-1,-1){5}}
\put(35,15){\vector(1,-1){5}}
\put(50,20){\makebox(0,0)[cc]{$\longrightarrow$}}
\qbezier(60,30)(70,20)(80,30)
\qbezier(60,10)(70,20)(80,10) 
\put(65,15){\vector(-1,-1){5}}
\put(75,25){\vector(1,1){5}}
\put(30,5){\makebox(0,0)[cc]{$c^-$}}
\end{picture}
\caption{Smoothings against orientation at $c$ and $c^-$.} \label{fig15new}
\end{figure}

\noindent By Lemma~\ref{lemma1} we have $\nabla J_{n}(D_{c}) = \nabla J_{n}(D^{-}_{c^-}) = -\nabla J_{n}(D^*_{c^{*}})$.  
Therefore   
\begin{eqnarray*}
L^{n}_{D^-}(t,\ell) & = & \sum_{c^- \in C(D^-)} \operatorname{sgn}(c^-)\left( t^{\operatorname{Ind}(c^-)} \ell^{\abs{\nabla J_{n}(D^-_{c^-})}} - \ell^{\abs{\nabla J_{n}(D^-)}} \right) \cr  
& = & \sum_{c \in C(D)} \operatorname{sgn}(c) \left( t^{(-\operatorname{Ind}(c))} \ell^{\abs{\nabla J_{n}(D_{c})}} - \ell^{\abs{\nabla J_{n}(D)}} \right) = L^{n}_{D}(t^{-1},\ell) . 
\end{eqnarray*}
Analogously, in the case of the mirror image we get 
\begin{eqnarray*}
L^{n}_{D^*}(t,\ell) & = & \sum_{c^ {*}\in C(D^*)} \operatorname{sgn}(c^*) \left( t^{\operatorname{Ind}(c^*)} \ell^{\abs{\nabla J_{n}(D^{*}_{c^*})}} - \ell^{\abs{\nabla J_{n}(D^*)}} \right) \cr  
& = & \sum_{c\in C(D)} - \operatorname{sgn}(c) \left( t^{(-\operatorname{Ind}(c))} \ell^{\abs{\nabla J_{n}(D_{c})}} - \ell^{\abs{\nabla J_{n}(D)}} \right) = -L^{n}_{D}(t^{-1},\ell) .  
\end{eqnarray*}    
\end{proof}
\section{Cosmetic crossing change conjecture}
\label{cs}

A crossing in a knot diagram is said to be \emph{nugatory} if it can be removed by twisting part of the knot,  see Fig.~\ref{fig20new}.  An example of a nugatory crossing is one that can be undone with an RI-move. Obviously, under applying a crossing change operation at a nugatory crossing we will get a diagram equivalent to the original diagram. Fig.~\ref{fig20new} shows a general form for a nugatory crossing. 
\begin{figure}[!ht]
\centering 
\unitlength=0.6mm
\begin{picture}(0,20)(0,10)
\thicklines
\qbezier(-30,10)(-30,10)(-30,30)
\qbezier(-30,10)(-30,10)(-10,10) 
\qbezier(-10,10)(-10,10)(-10,30)
\qbezier(-30,30)(-30,30)(-10,30)
\put(-20,20){\makebox(0,0)[cc]{$K$}}
\qbezier(30,10)(30,10)(30,30)
\qbezier(30,10)(30,10)(10,10) 
\qbezier(10,10)(10,10)(10,30)
\qbezier(30,30)(30,30)(10,30)
\put(20,20){\makebox(0,0)[cc]{$K'$}}
\qbezier(-10,15)(-10,15)(10,25)
\qbezier(-10,25)(-10,25)(-2,21)
\qbezier(10,15)(10,15)(2,19)
\end{picture}
\caption{Nugatory crossing.} \label{fig20new}
\end{figure}
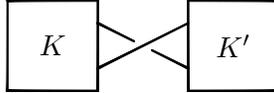

\begin{definition}\cite{folwaczny2013linking} {\rm 
A crossing change in knot diagram $D$ is said to be \emph{cosmetic} if the new diagram, say $D'$, is isotopic (classically or virtually) to $D$.  
}
\end{definition}

A crossing change on nugatory crossing is called \emph{trivial cosmetic crossing change}.  The following question is still open.

\begin{question} \cite[Problem~1.58]{kirby2problems}  {\rm 
Do non-trivial cosmetic crossing change exit? 
}
\end{question}

This question, often referred to as the \emph{cosmetic crossing change conjecture} or the  \emph{nugatory crossing conjecture}, has been answered in the negative for many classes of classical knots, see~\cite{Kalfagianni} and references therein.    
  
For virtual knots this question also has been answered in the negative for a wide class of knot.  In \cite[p.~15]{folwaczny2013linking}, L.~Folwaczny and L.~Kauffman proved that a crossing  $c$ in $D$ with $\operatorname{Ind}(c) \neq 0$ is not cosmetic. 

The following statement gives one more condition on a crossing, with which one can say that the crossing is not a cosmetic.   

\begin{theorem} \label{theorem5.2}
Let $D$ be a virtual knot diagram and $c$ be a crossing in $D$. If $\operatorname{Ind}(c)\neq 0$ or  there exists $n\in \mathcal{N}_D$ such that $ \nabla J_{n}(D_{c}) \neq \pm \nabla J_{n}(D)$, then $c$ is not a cosmetic crossing. 
\end{theorem}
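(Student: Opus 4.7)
The plan is to argue by contradiction using $L$-polynomials, exploiting the fact that crossing change is a flat move (so $\nabla J_n$ is insensitive to it by Lemma~\ref{lemma2}) while it does flip $\operatorname{sgn}$ and $\operatorname{Ind}$ at the affected crossing. Suppose $c$ is a cosmetic crossing and let $D'$ denote the diagram obtained from $D$ by switching the over/under information at $c$, with $c'\in D'$ the corresponding crossing. By hypothesis, $D$ and $D'$ represent the same virtual knot, so by Theorem~\ref{thm-h} we have $L^n_D(t,\ell) = L^n_{D'}(t,\ell)$ for every $n \in \mathbb{N}$.

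Next I would identify precisely which summands in $L^n_D$ and $L^n_{D'}$ agree and which do not. Since $D$ and $D'$ differ only at one crossing, for any other crossing $c''$ the local data $\operatorname{sgn}(c'')$ and $\operatorname{Ind}(c'')$ are unchanged, and the smoothed diagrams $D_{c''}$ and $D'_{c''}$ differ from each other only by a single crossing change at the site of $c$; hence Lemma~\ref{lemma2} gives $\nabla J_n(D_{c''}) = \nabla J_n(D'_{c''})$. Similarly, Lemma~\ref{lemma2} gives $\nabla J_n(D) = \nabla J_n(D')$. At the crossing $c$ itself we have $\operatorname{sgn}(c') = -\operatorname{sgn}(c)$, $\operatorname{Ind}(c') = -\operatorname{Ind}(c)$, and $D_c = D'_{c'}$ as oriented diagrams (since smoothing against orientation forgets the over/under datum), so $\nabla J_n(D_c) = \nabla J_n(D'_{c'})$. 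Taking the difference $L^n_D - L^n_{D'}$ therefore collapses to the contributions coming from $c$ and $c'$ alone, yielding
\[
0 \;=\; L^n_D(t,\ell) - L^n_{D'}(t,\ell) \;=\; \operatorname{sgn}(c)\Bigl[\bigl(t^{\operatorname{Ind}(c)}+t^{-\operatorname{Ind}(c)}\bigr)\ell^{\abs{\nabla J_n(D_c)}} \;-\; 2\,\ell^{\abs{\nabla J_n(D)}}\Bigr]
\]
for every $n\in\mathbb{N}$.

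Since $\operatorname{sgn}(c) = \pm 1$, the bracketed Laurent polynomial in $t$ and $\ell$ must vanish identically for every $n$. I would then split into the two cases of the hypothesis. If $\operatorname{Ind}(c) \neq 0$, the left summand contains the monomials $t^{\pm \operatorname{Ind}(c)}\ell^{\abs{\nabla J_n(D_c)}}$ with nonzero coefficient, whereas the right summand has no nontrivial $t$-dependence; comparing coefficients of $t^{\operatorname{Ind}(c)}$ is impossible, contradicting the equality. If instead there exists $n \in \mathcal{N}_D$ with $\nabla J_n(D_c) \neq \pm \nabla J_n(D)$, then $\abs{\nabla J_n(D_c)} \neq \abs{\nabla J_n(D)}$, and the vanishing condition for that particular $n$ reduces to $2\ell^{\abs{\nabla J_n(D_c)}} = 2\ell^{\abs{\nabla J_n(D)}}$, which fails. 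Either way we reach a contradiction, so $c$ cannot be cosmetic.

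The only real subtlety I anticipate is justifying the equalities in step two carefully, in particular $D_c = D'_{c'}$ (it is essential here that the smoothing rule in Fig.~\ref{fig5} depends only on the orientations of the arcs at the crossing, not on the over/under information) and the claim that $D_{c''}$ and $D'_{c''}$ differ by a single crossing change when $c'' \neq c$ (the crossing $c$ survives the smoothing at a different crossing $c''$, and its over/under information is precisely what gets switched). Once these identifications are in place, the coefficient comparison in the final step is purely algebraic.
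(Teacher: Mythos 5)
Your proof is correct and follows essentially the same route as the paper: both compare $L^n_D$ with $L^n_{D'}$ after the crossing change, isolate the contribution of the switched crossing, and arrive at the same difference formula $\operatorname{sgn}(c)\bigl((t^{\operatorname{Ind}(c)}+t^{-\operatorname{Ind}(c)})\ell^{\abs{\nabla J_n(D_c)}}-2\ell^{\abs{\nabla J_n(D)}}\bigr)$ before running the same two-case coefficient comparison. The only quibble is your claim that $D_c = D'_{c'}$ as oriented diagrams: by the convention of Fig.~\ref{fig5} the smoothings of a positive and a negative crossing reverse opposite halves of the diagram, so in fact $D'_{c'}=(D_c)^-$ as the paper notes, but since Lemma~\ref{lemma1} gives $\abs{\nabla J_n(D'_{c'})}=\abs{\nabla J_n(D_c)}$ this does not affect your final formula.
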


\begin{proof} 
Let $c$ be a classical crossing of $D$ such that either $ \nabla J_{n}(D_{c}) \neq \pm \nabla J_{n}(D)$ or $\text{Ind}(c)\neq 0$. Denote by $D'$ the virtual knot diagram obtained from $D$ by crossing change at $c$. Let $c'$ is the corresponding crossing of $D'$. Then 
$$
\begin{gathered}
L_{D'}^{n}(t,\ell) - \operatorname{sgn}(c') \left(t^{\operatorname{Ind}(c')} \ell^{\abs{\nabla J_{n}(D'_{c'})}}  -  \ell^{\abs{\nabla J_{n}(D')}} \right) \qquad \qquad \qquad \\  \qquad \qquad \qquad
=  L_{D}^{n}(t,\ell) - \operatorname{sgn}(c) \left(t^{\operatorname{Ind}(c)} \ell^{\abs{\nabla J_{n}(D_{c})}}  -  \ell^{\abs{\nabla J_{n}(D)}} \right) . 
\end{gathered}
$$ 

It was shown in the proof of Lemma~\ref{lemma2} that $\operatorname{sgn} (c') = - \operatorname{sgn} (c)$, $\operatorname{Ind} (c') = - \operatorname{Ind} (c)$, and $\nabla J_n (D') = \nabla J_n (D)$.  
Since $D'_{c'}$ is inverse of $D_c$, see Fig.~\ref{fig5}, by Lemma~\ref{lemma2} we have $\nabla J_n (D'_{c'}) = - \nabla J_n (D_c)$. Hence 
$$
L_{D'}^{n}(t,\ell) - L_{D}^{n}(t,\ell) =  \operatorname{sgn}(c) \left( 2 \ell^{\abs{\nabla J_{n}(D)}} - \left( t^{\operatorname{Ind}(c)} + t^{- \operatorname{Ind} (c)} \right) \ell^{\abs{\nabla J_n (D_c)}} \right) .
$$ 
If $\operatorname{Ind} (c) \neq 0$, then $L_{D'}^{n}(t,\ell) \neq  L_{D}^{n}(t,\ell)$. If $\operatorname{Ind} (c) = 0$ and $\abs{\nabla J_n (D_c)} \neq \abs{\nabla J_n (D)}$, then $L_{D'}^{n}(t,\ell) \neq  L_{D}^{n}(t,\ell)$ also. Since $L$-polynomials are virtual knot invariants, $D'$ is not equivalent to $D'$, whence crossing $c$ is non-cosmetic.  
\end{proof} 

\begin{corollary}
If for each classical crossing $c$ of an oriented virtual knot diagram $D$ we have $\operatorname{Ind}(c)\neq 0$ or $\nabla J_{n}(D_{c}) \neq \pm \nabla J_{n}(D)$ for some $n$, then $D$ does not admit cosmetic crossing change.
\end{corollary}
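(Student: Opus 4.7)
The plan is to derive the corollary as an immediate consequence of Theorem~\ref{theorem5.2}, applied crossing-by-crossing. Suppose, for contradiction, that $D$ admits a cosmetic crossing change. Then by definition there exists some classical crossing $c \in C(D)$ such that the diagram $D'$, obtained from $D$ by changing the crossing $c$, represents a virtual knot equivalent to that represented by $D$. I would then fix this particular $c$ and apply the hypothesis to it.

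By the assumption of the corollary, this chosen crossing $c$ satisfies either $\operatorname{Ind}(c) \neq 0$, or else $\nabla J_{n}(D_{c}) \neq \pm \nabla J_{n}(D)$ for some $n \in \mathbb{N}$. In the latter case, one may take $n \in \mathcal{N}_D$ without loss of generality, since for $n \notin \mathcal{N}_D$ both $\nabla J_n(D)$ and $\nabla J_n(D_c)$ vanish by Remark~\ref{rem2.2}, so the inequality $\nabla J_n(D_c) \neq \pm \nabla J_n(D)$ could not hold. Thus the hypotheses of Theorem~\ref{theorem5.2} are met at $c$, and that theorem concludes that $c$ is not a cosmetic crossing, contradicting the choice of $c$.

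Since every classical crossing of $D$ fails to be cosmetic, $D$ admits no cosmetic crossing change, which is exactly the statement of the corollary. No step here is substantive beyond a clean invocation of Theorem~\ref{theorem5.2}; the only minor subtlety to record carefully is the reduction to $n \in \mathcal{N}_D$ so that the hypothesis of Theorem~\ref{theorem5.2} (which is phrased for $n \in \mathcal{N}_D$) is literally applicable. There is no genuine obstacle in the argument.
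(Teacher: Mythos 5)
Your proof is correct and follows exactly the route the paper intends: the corollary is stated without proof as an immediate consequence of Theorem~\ref{theorem5.2} applied to each classical crossing. Your observation that the case $n \notin \mathcal{N}_D$ is vacuous (since then $\nabla J_n(D_c) = \nabla J_n(D) = 0$ by Remark~\ref{rem2.2}) is a worthwhile clarification of the mismatch between the quantifiers in the corollary and in the theorem, but it does not change the substance of the argument.
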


\section{F-polynomials}
\label{F-poly}

We observe that, due to absolute values of dwrithe, in some cases $L$-polynomials fail to distinguish given two virtual knots, see an example presented in Fig.~\ref{fig20}. To resolve this problem, we modify $L$-polynomials and define new polynomials which will be referred as  $F$-polynomials.

\begin{definition}  \label{g-pol} {\rm 
Let $D$ be an oriented virtual knot diagram and $n$ be a positive integer. Then \emph{$n$-th  $F$-polynomial} of $D$ is defined as 
$$
\begin{gathered}
F_{D}^{n}(t,\ell) = \sum_{c \in C(D)} \operatorname{sgn}(c)t^{\text{Ind}(c)} \ell^{\nabla J_{n}(D_{c})}  \qquad \qquad \qquad  \\ 
\qquad \qquad \qquad \qquad -  \sum _{c\in T_{n}(D)} \operatorname{sgn}(c) \ell^{\nabla J_{n}(D_{c})} - \sum _{c\notin T_{n}(D)} \operatorname{sgn} (c) \ell^{\nabla J_{n}(D)}, 
\end{gathered}
$$ 
where  $T_{n}(D)=\{c \in C(D) \mid \nabla J_{n}(D_{c}) = \pm \nabla J_{n}(D)\}$. 
}
\end{definition}

\begin{remark} \label{rem6.2} {\rm 
By replacing $\nabla J_n (D)$ by $\abs{\nabla J_n (D)}$ and  $\nabla J_n (D_c)$ by $\abs{\nabla J_n (D_c)}$ in the definition of $F$-polynomial one will get $L$-polynomial. Hence by replacing each $\ell^q$, $q \in \mathbb Z$, in $F$-polynomial by $\ell^{\abs{q}}$ one will get $L$-polynomial. 
} 
\end{remark}

\begin{example} {\rm 
Consider the virtual knot diagram $D$ presented in Fig.~\ref{fig6}. It has four classical crossings $\{ \alpha, \beta, \gamma. \delta\}$.To calculate $F_D^n(t,\ell)$ we should find $T_n (D)$. Diagrams $D_{\alpha}$, $D_{\beta}$, $D_{\gamma}$ and $D_{\delta}$ are presented in Fig.~\ref{fig7}. It was shown in Example~\ref{ex3.2} that $\nabla J_1 (D) = 0$ and $\nabla J_2 (D) = 0$. Comparing with Table~\ref{t1} we get $T_{1}(D) =  \{\gamma,\delta\}$ and $T_{2}(D) = \{\gamma,\delta\}$. Using values presented in Table~\ref{t1}, we obtain 
$$
F^{1}_{D}(t,\ell)=-t^{2}\ell^{2}-t^{-2}\ell^{-2}+2
$$
and  
$$
F^{2}_{D}(t,\ell)=-t^{2}\ell^{-1} - t^{-2}\ell +2. 
$$
For $n\geq 3$, we have $T_{n}(D)=\{\alpha,\beta,\gamma,\delta\}$ and 
$$
F^{n}_{D}(t,\ell) = P_{D}(t) =  2 - t^{-2} - t^{2}. 
$$ 
} 
\end{example}

\begin{theorem} \label{gth} 
For any $n\in \mathbb{N}$, the polynomial $F^{n}_{K}(t,\ell)$ is a virtual knot invariant. 
\end{theorem}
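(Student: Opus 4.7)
The plan is to mirror the proof of Theorem~\ref{thm-h}, checking invariance of $F^n_D(t,\ell)$ under each classical Reidemeister move RI, RII, RIII and the semi-virtual move SV; invariance under virtual Reidemeister moves is immediate from the definition since such moves do not alter classical crossings, their signs, their indices, or their against-orientation smoothings up to virtual equivalence. For each move $D \rightsquigarrow D'$ the task splits into two parts: (a) show that the contributions coming from the crossings directly involved in the move either cancel or match correctly, and (b) verify that for the remaining crossings both the contribution and the membership in $T_n$ transfer properly from $D$ to $D'$.

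For the RI-move with new crossing $c'$, the analysis in the proof of Theorem~\ref{thm-h} gives $\operatorname{Ind}(c')=0$ and $D'_{c'}$ equivalent to $D$ or to its reverse $D^-$. By Lemma~\ref{lemma1} this forces $\nabla J_n(D'_{c'}) = \pm \nabla J_n(D) = \pm \nabla J_n(D')$, so $c' \in T_n(D')$, and the total contribution of $c'$ is $\operatorname{sgn}(c')\bigl(t^{0}\ell^{\nabla J_n(D'_{c'})} - \ell^{\nabla J_n(D'_{c'})}\bigr)=0$. For every other crossing $c$, the diagrams $D_c$ and $D'_c$ differ only by an RI-move away from the smoothed region, so by Remark~\ref{rem-inv} $\nabla J_n(D'_c) = \nabla J_n(D_c)$; together with $\nabla J_n(D') = \nabla J_n(D)$ this preserves $T_n$-membership and matches contributions term by term.

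For the RII-move with new crossings $a,b$, the proof of Theorem~\ref{thm-h} yields the \emph{signed} equality $\nabla J_n(D'_a) = \nabla J_n(D'_b)$. Combined with $\operatorname{sgn}(a) = -\operatorname{sgn}(b)$ and $\operatorname{Ind}(a) = \operatorname{Ind}(b)$, the main-sum contributions of $a$ and $b$ cancel; either both lie in $T_n(D')$ or neither does, so the correction-sum contributions cancel as well. For the RIII-move, the bijection $x \leftrightarrow x'$ between crossings preserves signs and indices, and the flat-invariance arguments in Theorem~\ref{thm-h} give $\nabla J_n(D'_{x'}) = \nabla J_n(D_x)$ for each $x \in \{a,b,c\}$; together with $\nabla J_n(D') = \nabla J_n(D)$ this yields $x \in T_n(D) \iff x' \in T_n(D')$, and all three sums match termwise. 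The SV-move is handled by the same reasoning, using the identifications of the smoothed diagrams drawn in Figs.~\ref{figSVcase1} and~\ref{figSVcase2}.

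The principal obstacle is the bookkeeping around $T_n$: unlike the $L$-polynomial case, where absolute values absorbed all sign ambiguity, $F^n_D$ treats a crossing $c$ differently depending on whether $\nabla J_n(D_c) = \pm \nabla J_n(D)$ holds. What makes the argument go through is precisely that the equalities produced in the proof of Theorem~\ref{thm-h} are \emph{signed} equalities of $n$-dwrithes, not merely of their absolute values; this is what guarantees that the partition $C(D) = T_n(D) \sqcup (C(D)\setminus T_n(D))$ is compatible with the move-induced correspondence of crossings. Once this bookkeeping is verified case by case, the algebraic cancellations proceed exactly as for $L^n_D$, and invariance follows.
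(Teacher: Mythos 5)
Your proposal is correct and follows essentially the same route as the paper: reduce to the moves RI, RII, RIII, SV, reuse the signed dwrithe equalities already established in the proof of Theorem~\ref{thm-h}, check that $T_n$-membership is compatible with the move-induced correspondence of crossings, and let the same cancellations go through. Your closing observation --- that the signed (not merely absolute-value) equalities of $\nabla J_n$ are exactly what makes the $T_n$ bookkeeping work --- is precisely the point the paper's argument relies on.
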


\begin{proof}
The proof is analogous to the proof of Theorem~\ref{thm-h} and consists of observing the behavior of $F^{n}_{K}(t, \ell)$ under classical Reidemeister moves RI, RII, RIII, and semi virtual move SV. Considering the set $T_{n}(D)=\{c \in C(D) \mid \nabla J_{n}(D_{c}) = \pm \nabla J_{n}(D)\}$, we rewrite $F^{n}_{K} (t, \ell)$ in the form
$$
\begin{gathered}
F_{D}^{n}(t,\ell) = \sum_{c \in T_{n}(D)} \operatorname{sgn}(c) \left[ t^{\text{Ind}(c)} \ell^{\nabla J_{n}(D_{c})}  -  \ell^{\nabla J_{n}(D_{c})} \right] \cr 
\qquad \qquad \qquad + \sum _{c \in C(D) \setminus T_{n}(D)}  \operatorname{sgn}(c) \left[ t^{\text{Ind}(c)} \ell^{\nabla J_{n}(D_{c})}  - \ell^{\nabla J_{n}(D)} \right]. 
\end{gathered}
$$ 

\underline{RI-move:} Let $D'$ be a diagram obtained from $D$ by move RI and $c'$ be the new crossing of $D'$. It can be seen from the proof of Theorem~\ref{thm-h} that $\operatorname{Ind}(c')=0$ and $\abs{\nabla J_{n} (D'_{c'})} = \abs{\nabla J_{n} (D')}$. Hence, $c' \in T_{n}(D')$ and 
$$
F^{n}_{D'} (t, \ell) =  F^{n}_{D} (t, \ell) + \operatorname{sgn}(c') \left[ t^{\text{Ind}(c')} \ell^{\nabla J_{n}(D'_{c'})}  -  \ell^{\nabla J_{n}(D'_{c'})} \right] = F^{n}_{D} (t, \ell). 
$$

\underline{RII-move:} Let $D'$ be a diagram obtained from $D$ by move RII and let $a$ and $b$ be new crossings in $D'$. It can be seen from the proof of Theorem~\ref{thm-h} that $\operatorname{sgn} (a) = - \operatorname{sgn}(b)$ and $\nabla J_{n} (D'_{a}) = \nabla J_{n} (D'_{b})$. Moreover, $a \in T_{n}(D')$ if and only if $b \in T_{n}(D')$. Therefore, $F^{n}_{D'} (t, \ell) = F^{n}_{D} (t, \ell)$.  

\underline{RIII-move:} In notations of the proof of Theorem~\ref{thm-h} we have $\operatorname{sgn} (a') = \operatorname{sgn}(a)$, $\operatorname{sgn}(b') = \operatorname{sgn}(b)$, and $\operatorname{sgn}(c') = \operatorname{sgn}(c)$.  Moreover,  $\nabla J_n (D'_{a'}) = \nabla J_n (D_a)$, $\nabla J_n (D'_{b'}) = \nabla J_n (D_b)$ and $\nabla J_n (D'_{c'}) = \nabla J_n (D_c)$. Hence $F^{n}_{D'} (t, \ell) = F^{n}_{D} (t, \ell)$. 

\underline{SV-move:} In notations of the proof of Theorem~\ref{thm-h} we have $\operatorname{c'} = \operatorname{c}$. Moreover, $\nabla J_{n} (D'_{c'}) = \nabla J_{n} (D_{c})$ and $\nabla J_{n} (D') = \nabla J_{n} (D)$. Therefore, $F^{n}_{D'} (t, \ell) = F^{n}_{D} (t, \ell)$. 
\end{proof}

\begin{proposition}  
If two oriented virtual knots are distinguished by $L$-polynomials, then they are distinguished by $F$-polynomials too. The converse property doesn't hold. 
\end{proposition}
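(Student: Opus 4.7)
The plan is to split the statement into the two asserted implications and handle them separately. For the forward implication, the key observation is Remark~\ref{rem6.2}, which says that one recovers $L^n_K(t,\ell)$ from $F^n_K(t,\ell)$ by replacing each monomial $\ell^q$ appearing in $F^n_K$ by $\ell^{\abs{q}}$. I would formalize this by defining the substitution (a $\mathbb{Z}$-linear map on Laurent polynomials in $\ell$ over $\mathbb{Z}[t,t^{-1}]$) $\phi\colon \ell^q \mapsto \ell^{\abs{q}}$ and simply noting that $\phi\bigl(F^n_K(t,\ell)\bigr) = L^n_K(t,\ell)$ for every oriented virtual knot $K$ and every $n\in\mathbb{N}$. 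This is immediate from comparing Definition~\ref{h} with Definition~\ref{g-pol}, once the $F$-polynomial is rewritten in the form used in the proof of Theorem~\ref{gth}, so that each crossing contributes a term $\operatorname{sgn}(c)\bigl(t^{\operatorname{Ind}(c)}\ell^{\nabla J_n(D_c)} - \ell^{\nabla J_n(D_c)}\bigr)$ when $c\in T_n(D)$, and $\operatorname{sgn}(c)\bigl(t^{\operatorname{Ind}(c)}\ell^{\nabla J_n(D_c)} - \ell^{\nabla J_n(D)}\bigr)$ otherwise; applying $\phi$ termwise yields precisely the summands of $L^n_D(t,\ell)$.

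From this, the forward implication follows by contrapositive: if $F^n_{K_1}(t,\ell) = F^n_{K_2}(t,\ell)$ for all $n\in\mathbb{N}$, then applying $\phi$ to both sides gives $L^n_{K_1}(t,\ell) = L^n_{K_2}(t,\ell)$ for all $n$, so the two knots cannot be distinguished by $L$-polynomials. Equivalently, if some $L^n_{K_1} \neq L^n_{K_2}$, then the same $n$ gives $F^n_{K_1} \neq F^n_{K_2}$, because $\phi$ is a well-defined function on polynomials and agreement of inputs forces agreement of outputs.

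For the converse — that $F$-polynomials are strictly finer than $L$-polynomials — I would exhibit an explicit pair of oriented virtual knots on which all $L$-polynomials coincide but some $F^n$-polynomial does not. This is precisely the content of Example~\ref{ex6.5}, which the authors promise in the introduction and present in Section~\ref{F-poly}: a pair of virtual knots for which $L^n_K$ (and the writhe polynomial) fail to discriminate, while $F^n_K$ does. It suffices to refer to that example and verify directly that the signed contributions to $L^n$ coincide (because absolute values $\abs{\nabla J_n(D_c)}$ and $\abs{\nabla J_n(D)}$ match across the two diagrams), yet the signed contributions to $F^n$ differ (because the signed values $\nabla J_n(D_c)$ themselves do not match).

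The main obstacle is really only locating, or constructing, such an example: the forward implication is a one-line consequence of Remark~\ref{rem6.2}, but the non-converse requires an honest pair of diagrams where the sign of some $\nabla J_n(D_c)$ differs while its absolute value is preserved. Once Example~\ref{ex6.5} is in hand, both directions are established.
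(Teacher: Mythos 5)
Your proposal is correct and follows essentially the same route as the paper: the forward implication rests on Remark~\ref{rem6.2} (you package the coefficient relation as a linear substitution $\ell^q\mapsto\ell^{\abs{q}}$ and argue by contrapositive, whereas the paper compares the coefficients $A_1+A_2$ and $B_1+B_2$ of $t^p\ell^{\pm q}$ directly, which is the same computation), and the failure of the converse is witnessed by Example~\ref{ex6.5} in both cases. No gaps.
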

 
\begin{proof}  
Let $K_1$ and $K_2$ be two oriented virtual knot distinguished by $L$-polynomial. Hence there exist $n \in \mathbb N$, $p\in \mathbb Z$, and $q \in \mathbb N$ such that coefficients of $t^p \ell^q$ in $L^n_{K_1}(t,\ell)$  and $L^n_{K_2}(t,\ell)$  are different.

Let $A_1$ and $A_{2}$ be the coefficient of $t^p \ell^q$  and  $t^p \ell^{-q}$ in $F^n_{K_1}(t,\ell)$, respectively. Let $B_1$ and $B_{2}$ be the coefficient of $t^p \ell^q$  and  $t^p \ell^{-q}$ in $F^n_{K_2}(t,\ell)$, respectively. By Remark~\ref{rem6.2}, the coefficient of $t^p \ell^q$ in $L^n_{K_1}(t,\ell)$ is equal $A_1 + A_2$, and the coefficient of $t^p \ell^q$ in  $L^n_{K_2}(t,\ell)$ is equal $B_1 + B_2$. But these coefficients should be different, i.~e.  $A_1+A_2 \neq B_1+ B_2$. Therefore, either $A_1 \neq  B_1$ or $A_2 \neq B_2$. Hence  $F^n_{K_1}(t,\ell) \neq  F^n_{K_2}(t,\ell)$. 

Example~\ref{ex6.5} demonstrate that the converse property doesn't hold.  
\end{proof} 
 
\begin{example} \label{ex6.5} {\rm 
Consider two oriented virtual knots $K$ and $K'$ depicted in Fig.~\ref{fig20}. In Table~\ref{t6} we present the writhe polynomial, $L$-polynomials, and $F$-polynomials, calculated for $K$ and $K'$. One can see that $K$ and $K'$ can not be distinguished by the writhe polynomial and the $L$-polynomials. But $F$-polynomials distinguish them. 
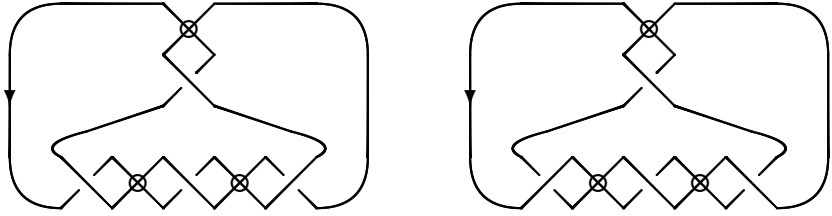
\begin{figure}[!ht]
\centering 
\unitlength=0.34mm
\begin{picture}(0,80)
\put(-90,0){\begin{picture}(0,80) 
\thicklines
\qbezier(-50,20)(-50,20)(-30,0)
\qbezier(-50,0)(-50,0)(-43,7) 
\qbezier(-30,20)(-30,20)(-37,13)
\qbezier(-30,0)(-30,0)(-10,20)
\qbezier(-30,20)(-30,20)(-10,0) 
\put(-20,10){\circle{6}}
\qbezier(-10,20)(-10,20)(10,0)
\qbezier(-10,0)(-10,0)(-3,7)
\qbezier(10,20)(10,20)(3,13) 
\qbezier(10,20)(10,20)(30,0)
\qbezier(10,0)(10,0)(30,20) 
\put(20,10){\circle{6}}
\qbezier(30,0)(30,0)(50,20)
\qbezier(30,20)(30,20)(37,13) 
\qbezier(50,0)(50,0)(43,7)
\qbezier(-10,40)(-10,40)(-3,47)
\qbezier(3,53)(3,53)(10,60) 
\qbezier(-10,60)(-10,60)(10,40)
\qbezier(-10,60)(-10,60)(10,80)
\qbezier(-10,80)(-10,80)(10,60) 
\put(0,70){\circle{6}}
\qbezier(-50,20)(-60,25)(-40,30)
\qbezier(-40,30)(-10,40)(-10,40)
\qbezier(50,20)(60,25)(40,30)
\qbezier(40,30)(10,40)(10,40)
\qbezier(-50,0)(-70,0)(-70,20)
\qbezier(-70,20)(-70,20)(-70,60)
\qbezier(-70,60)(-70,80)(-50,80) 
\qbezier(-50,80)(-50,80)(-10,80) 
\qbezier(50,0)(70,0)(70,20)
\qbezier(70,20)(70,20)(70,60)
\qbezier(70,60)(70,80)(50,80) 
\qbezier(50,80)(50,80)(10,80) 
\put(-70,45){\vector(0,-1){5}}
\end{picture}}
\put(90,0){\begin{picture}(0,80) 
\thicklines
\qbezier(-50,0)(-50,0)(-30,20)
\qbezier(-50,20)(-50,20)(-43,13) 
\qbezier(-30,0)(-30,0)(-37,7)
\qbezier(-30,20)(-30,20)(-10,0)
\qbezier(-30,0)(-30,0)(-10,20) 
\put(-20,10){\circle{6}}
\qbezier(-10,20)(-10,20)(10,0)
\qbezier(-10,0)(-10,0)(-3,7)
\qbezier(10,20)(10,20)(3,13) 
\qbezier(10,0)(10,0)(30,20)
\qbezier(10,20)(10,20)(30,0) 
\put(20,10){\circle{6}}
\qbezier(30,20)(30,20)(50,0)
\qbezier(30,0)(30,0)(37,7)
\qbezier(50,20)(50,20)(43,13) 
\qbezier(-10,40)(-10,40)(-3,47)
\qbezier(3,53)(3,53)(10,60) 
\qbezier(-10,60)(-10,60)(10,40)
\qbezier(-10,60)(-10,60)(10,80)
\qbezier(-10,80)(-10,80)(10,60) 
\put(0,70){\circle{6}}
\qbezier(-50,20)(-60,25)(-40,30)
\qbezier(-40,30)(-10,40)(-10,40)
\qbezier(50,20)(60,25)(40,30)
\qbezier(40,30)(10,40)(10,40)
\qbezier(-50,0)(-70,0)(-70,20)
\qbezier(-70,20)(-70,20)(-70,60)
\qbezier(-70,60)(-70,80)(-50,80) 
\qbezier(-50,80)(-50,80)(-10,80) 
\qbezier(50,0)(70,0)(70,20)
\qbezier(70,20)(70,20)(70,60)
\qbezier(70,60)(70,80)(50,80) 
\qbezier(50,80)(50,80)(10,80) 
\put(-70,45){\vector(0,-1){5}}
\end{picture}}
\end{picture}
\caption{Oriented virtual knots $K$ (on left) and $K'$ (on right).} \label{fig20}
\end{figure}

\begin{table}[ht]
\caption{Invariants of virtual knots $K$ and $K'$ shown in Fig.~\ref{fig20}.} 
{\small 
\begin{tabular}{ l | l } \hline 
Virtual knot $K$ & Virtual knot $K'$\\  \hline 
\multicolumn{2}{c} {Writhe polynomial} \\  
$W_{K}(t)=-2t^{2}-t^{-2}+1$ & $W_{K'}(t)=-2t^{2}-t^{-2}+1$\\ \hline 
\multicolumn{2}{c} {$L$-polynomials} \\  
$L^{1}_{K}(t,\ell)=t^{-1} \ell^{2}-t\ell^{2}-t-t^{-3}+2\ell^{3}$ & $L^{1}_{K'}(t,\ell)=t^{-1} \ell^{2}-t \ell^{2}-t-t^{-3}+2\ell^{3}$\\
$L^{2}_{K}(t,\ell)=t^{-1}\ell-t\ell-t-t^{-3}+2$ & $L^{2}_{K'}(t,\ell)=t^{-1} \ell - t \ell -t- t^{-3} + 2$\\
$L^{3}_{K}(t,\ell)=t^{-1}-2t-t^{-3}+2\ell$ & $L^{3}_{K'}(t,\ell)=t^{-1}-2t-t^{-3}+2\ell$\\ \hline         
\multicolumn{2}{c} {$F$-polynomials} \\       
$F^{1}_{K}(t,\ell) = t^{-1} \ell^{-2} - t \ell^{-2} - t - t^{-3}+2 \ell^{-3}$ & $F^{1}_{K'}(t,\ell)=t^{-1}\ell^{2}-t \ell^{2}-t-t^{-3}+2\ell^{-3}$\\   
$F^{2}_{K}(t,\ell )= t^{-1}\ell-t\ell-t-t^{-3}+2$ & $F^{2}_{K'}(t,\ell)=t^{-1} \ell^{-1} - t \ell^{-1}- t - t^{-3} + 2$\\
$F^{3}_{K}(t,\ell) = t^{-1}-2t-t^{-3}+2\ell$ & $F^{3}_{K'}(t,\ell)=t^{-1}-2t-t^{-3}+2\ell$\\ \hline   
\end{tabular} 
}\label{t6}
\end{table} } 
\end{example}

\section{Mutation by positive reflection}
\label{mutbyp}
One of useful local transformation of a knot, producing another knot, is a \emph{mutation} introduced by Conway~\cite{conway1970enumeration}. Conway mutation of a given knot $K$ is achieved by cutting out a tangle (cutting the knot $K$ at four points) and gluing it back  after making a horizontal flip, a vertical flip, or  a $\pi$--rotation. The  mutation  is \emph{positive} if  the  orientation of  the  arcs of the tangle doesn't change under mutation. A \emph{positive reflection} is  a  positive  mutation as shown in Fig.~\ref{fig16new}.  
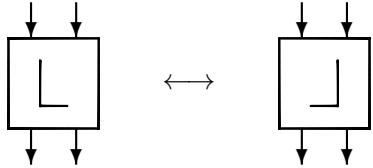
\begin{figure}[!ht]
\centering 
\unitlength=0.6mm
\begin{picture}(0,35)(0,5)
\thicklines
\qbezier(-40,10)(-40,10)(-40,30)
\qbezier(-40,10)(-40,10)(-20,10) 
\qbezier(-20,10)(-20,10)(-20,30)
\qbezier(-40,30)(-40,30)(-20,30)
\put(-35,38){\vector(0,-1){8}}
\put(-25,38){\vector(0,-1){8}}
\put(-35,10){\vector(0,-1){8}}
\put(-25,10){\vector(0,-1){8}}
\qbezier(-33,25)(-33,25)(-33,15)
\qbezier(-33,15)(-33,15)(-27,15)
\put(0,20){\makebox(0,0)[cc]{$\longleftrightarrow$}}
\qbezier(40,10)(40,10)(40,30)
\qbezier(40,10)(40,10)(20,10) 
\qbezier(20,10)(20,10)(20,30)
\qbezier(40,30)(40,30)(20,30)
\put(35,38){\vector(0,-1){8}}
\put(25,38){\vector(0,-1){8}}
\put(35,10){\vector(0,-1){8}}
\put(25,10){\vector(0,-1){8}}
\qbezier(33,25)(33,25)(33,15)
\qbezier(33,15)(33,15)(27,15)
\end{picture}
\caption{Positive reflection mutation.} \label{fig16new}
\end{figure}

It was shown in~\cite{folwaczny2013linking} that the affine index polynomial fails to identify mutation by positive reflection. In Examples~\ref{ex4.2} and~\ref{ex4.3} we construct a family of virtual knots and their positive reflection mutants which are distinguished by $F$-polynomials.  

\begin{example} \label{ex4.2} {\rm 
Consider a virtual knot $K$  and its positive reflection mutant $MK$, obtained by taking positive reflection mutation in the dashed box as shown in~Fig.~\ref{fig17}.   
\begin{figure}[!ht]
\centering 
\unitlength=0.34mm
\begin{picture}(0,125)(0,-5)
\put(-80,0){\begin{picture}(0,120)
\thicklines
\qbezier(10,120)(10,120)(-10,100) 
\qbezier(-10,120)(-10,120)(-3,113)
\qbezier(10,100)(10,100)(3,107)
\qbezier(10,100)(10,100)(-10,80) 
\qbezier(-10,100)(-10,100)(10,80)
\put(0,90){\circle{6}}
\qbezier(10,80)(10,80)(-10,60) 
\qbezier(-10,80)(-10,80)(-3,73)
\qbezier(10,60)(10,60)(3,67)
\qbezier(10,60)(10,60)(30,40)
\qbezier(10,40)(10,40)(30,20)
\qbezier(30,40)(30,40)(23,33)
\qbezier(10,20)(10,20)(17,27)
\qbezier(10,20)(10,20)(30,0)
\qbezier(10,0)(10,0)(30,20)
\put(20,10){\circle{6}}
\qbezier(10,40)(10,40)(-10,30)
\qbezier(10,0)(10,0)(-10,10)
\qbezier(-10,60)(-30,30)(-30,30)
\qbezier(-30,30)(-30,30)(-10,10)
\qbezier(-30,10)(-30,10)(-22,18)
\qbezier(-10,30)(-10,30)(-18,22)
\put(-15,110){\makebox(0,0)[cc]{\footnotesize $a$}}
\put(-15,70){\makebox(0,0)[cc]{\footnotesize $b$}}
\put(30,30){\makebox(0,0)[cc]{\footnotesize $d$}}
\put(-20,10){\makebox(0,0)[cc]{\footnotesize $c$}}
\qbezier(10,120)(40,120)(40,120)
\qbezier(40,120)(50,120)(50,110) 
\qbezier(50,110)(50,110)(50,10) 
\qbezier(50,10)(50,0)(30,0)
\qbezier(-10,120)(-40,120)(-40,120)
\qbezier(-40,120)(-50,120)(-50,110) 
\qbezier(-50,110)(-50,110)(-50,10) 
\qbezier(-50,10)(-50,0)(-40,0)
\qbezier(-40,0)(-40,0)(-30,10)
\put(-50,70){\vector(0,1){5}}
{\thinlines
\multiput(-40,45)(4,0){20}{\line(1,0){2}}
\multiput(-40,-5)(4,0){20}{\line(1,0){2}}
\multiput(-40,45)(0,-4){13}{\line(0,-1){2}}
\multiput(40,45)(0,-4){13}{\line(0,-1){2}}
}
\end{picture}} 
\put(80,0){\begin{picture}(0,120)
\thicklines
\qbezier(10,120)(10,120)(-10,100) 
\qbezier(-10,120)(-10,120)(-3,113)
\qbezier(10,100)(10,100)(3,107)
\qbezier(10,100)(10,100)(-10,80) 
\qbezier(-10,100)(-10,100)(10,80)
\put(0,90){\circle{6}}
\qbezier(10,80)(10,80)(-10,60) 
\qbezier(-10,80)(-10,80)(-3,73)
\qbezier(10,60)(10,60)(3,67)
\qbezier(-10,60)(-10,60)(-30,40)
\qbezier(-10,20)(-10,20)(-30,40)
\qbezier(-30,20)(-30,20)(-23,27)
\qbezier(-10,40)(-10,40)(-17,33)
\qbezier(-10,20)(-10,20)(-30,0)
\qbezier(-10,0)(-10,0)(-30,20)
\put(-20,10){\circle{6}}
\qbezier(-10,40)(-10,40)(10,30)
\qbezier(-10,0)(-10,0)(10,10)
\qbezier(10,60)(30,30)(30,30)
\qbezier(30,10)(30,10)(10,30)
\qbezier(30,30)(30,30)(22,22)
\qbezier(10,10)(10,10)(18,18)
\put(-15,110){\makebox(0,0)[cc]{\footnotesize $a$}}
\put(-15,70){\makebox(0,0)[cc]{\footnotesize $b$}}
\put(-30,30){\makebox(0,0)[cc]{\footnotesize $d$}}
\put(20,10){\makebox(0,0)[cc]{\footnotesize $c$}}
\qbezier(10,120)(40,120)(40,120)
\qbezier(40,120)(50,120)(50,110) 
\qbezier(50,110)(50,110)(50,10) 
\qbezier(50,10)(50,0)(40,0)
\qbezier(40,0)(40,0)(30,10)
\qbezier(-10,120)(-40,120)(-40,120)
\qbezier(-40,120)(-50,120)(-50,110) 
\qbezier(-50,110)(-50,110)(-50,10) 
\qbezier(-50,10)(-50,0)(-40,0)
\qbezier(-40,0)(-40,0)(-30,0)
\put(-50,70){\vector(0,1){5}}
{\thinlines
\multiput(-40,45)(4,0){20}{\line(1,0){2}}
\multiput(-40,-5)(4,0){20}{\line(1,0){2}}
\multiput(-40,45)(0,-4){13}{\line(0,-1){2}}
\multiput(40,45)(0,-4){13}{\line(0,-1){2}}
}
\end{picture}} 
\end{picture}
\caption{Virtual knot $K$ (left) and its mutant $MK$ (right).} \label{fig17}
\end{figure}
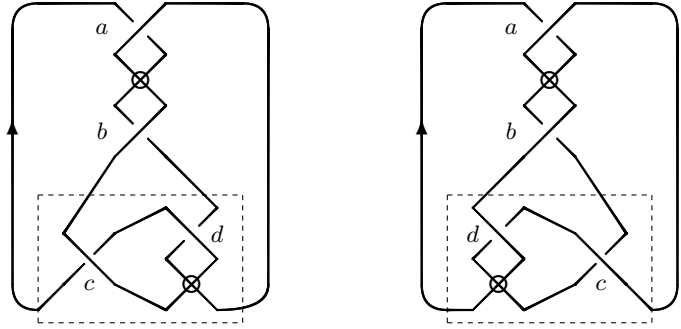

The $F$-polynomials of $K$ and $MK$ are given in Table~\ref{t2}. 

\begin{table}[ht]
\caption{$F$-polynomials of virtual knots $K$ and $MK$ shown in Fig.~\ref{fig17}.} 
{\small 
\begin{tabular}{ l |  l } \hline 
Virtual knot $K$ & Virtual knot $MK$\\ \hline  
$F^{1}_{K}(t,\ell)=t^{2}+1+t^{-1}\ell^{2}-t-\ell^2-\ell^{-2}$ & $F^{1}_{MK}(t,\ell)=t^{2}+1+t^{-1}\ell^{-2}-t-2\ell^{-2}$ \\
$F^{2}_{K}(t,\ell)=t^{2}+1+t^{-1}\ell^{-1}-t-\ell-\ell^{-1}$& $F^{2}_{MK}(t,\ell)=t^{2}+1+t^{-1}\ell-t-2\ell$ \\ \hline  
\end{tabular} } \label{t2} 
\end{table}
}
\end{example}
 
\begin{example} \label{ex4.3} {\rm 
Now we generalize Example~\ref{ex4.2} to an infinite family of virtual knots. Let  $K_{n}$  be a virtual knot obtained from $K$, presented in Fig.~\ref{fig17}, by replacing classical crossing $c$ to $n\geq 1$ classical crossings $c_{1}, \ldots, c_{n}$ as shown in Fig.~\ref{fig18}. Let $MK_n$ be the positive reflection mutant of $K_{n}$. Signs of crossings, index values and  $1$-- and $2$--dwrithes for these knots are presented in Table~\ref{tab3} and Table~\ref{tab4}. Using these data we compute $F$-polynomials of virtual knots $K_n$ and $MK_n$, see Table~\ref{tn}. One can see that $F$-polynomials  distinguish these knots. Therefore, $F$-polynomials can distinguish virtual knots and their positive reflection mutants.     
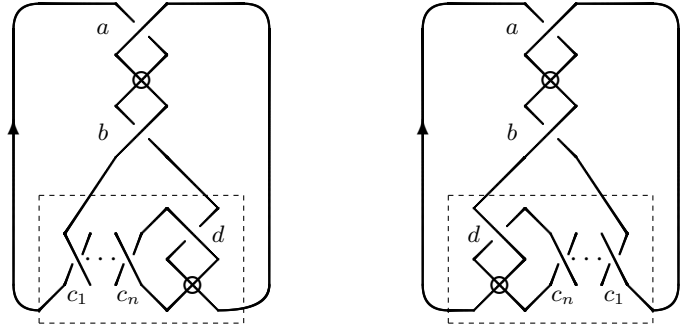
\begin{figure}[!ht]
\centering 
\unitlength=0.34mm
\begin{picture}(0,125)(0,-5)
\put(-80,0){\begin{picture}(0,120)
\thicklines
\qbezier(10,120)(10,120)(-10,100) 
\qbezier(-10,120)(-10,120)(-3,113)
\qbezier(10,100)(10,100)(3,107)
\qbezier(10,100)(10,100)(-10,80) 
\qbezier(-10,100)(-10,100)(10,80)
\put(0,90){\circle{6}}
\qbezier(10,80)(10,80)(-10,60) 
\qbezier(-10,80)(-10,80)(-3,73)
\qbezier(10,60)(10,60)(3,67)
\qbezier(10,60)(10,60)(30,40)
\qbezier(10,40)(10,40)(30,20)
\qbezier(30,40)(30,40)(23,33)
\qbezier(10,20)(10,20)(17,27)
\qbezier(10,20)(10,20)(30,0)
\qbezier(10,0)(10,0)(30,20)
\put(20,10){\circle{6}}
\qbezier(10,40)(10,40)(0,30)
\qbezier(10,0)(10,0)(0,10)
\qbezier(-10,60)(-30,30)(-30,30)
\qbezier(-30,30)(-30,30)(-20,10)
\qbezier(-30,10)(-30,10)(-27,18)
\qbezier(-20,30)(-20,30)(-23,22)
\qbezier(-10,30)(-10,30)(0,10)
\qbezier(-10,10)(-10,10)(-7,18)
\qbezier(0,30)(0,30)(-3,22)
\put(-15,20){\makebox(0,0)[cc]{$\cdots$}}
\put(-15,110){\makebox(0,0)[cc]{\footnotesize $a$}}
\put(-15,70){\makebox(0,0)[cc]{\footnotesize $b$}}
\put(30,30){\makebox(0,0)[cc]{\footnotesize $d$}}
\put(-25,5){\makebox(0,0)[cc]{\footnotesize $c_{1}$}}
\put(-5,5){\makebox(0,0)[cc]{\footnotesize $c_{n}$}}
\qbezier(10,120)(40,120)(40,120)
\qbezier(40,120)(50,120)(50,110) 
\qbezier(50,110)(50,110)(50,10) 
\qbezier(50,10)(50,0)(30,0)
\qbezier(-10,120)(-40,120)(-40,120)
\qbezier(-40,120)(-50,120)(-50,110) 
\qbezier(-50,110)(-50,110)(-50,10) 
\qbezier(-50,10)(-50,0)(-40,0)
\qbezier(-40,0)(-40,0)(-30,10)
\put(-50,70){\vector(0,1){5}}
{\thinlines
\multiput(-40,45)(4,0){20}{\line(1,0){2}}
\multiput(-40,-5)(4,0){20}{\line(1,0){2}}
\multiput(-40,45)(0,-4){13}{\line(0,-1){2}}
\multiput(40,45)(0,-4){13}{\line(0,-1){2}}
}
\end{picture}} 
\put(80,0){\begin{picture}(0,120)
\thicklines
\qbezier(10,120)(10,120)(-10,100) 
\qbezier(-10,120)(-10,120)(-3,113)
\qbezier(10,100)(10,100)(3,107)
\qbezier(10,100)(10,100)(-10,80) 
\qbezier(-10,100)(-10,100)(10,80)
\put(0,90){\circle{6}}
\qbezier(10,80)(10,80)(-10,60) 
\qbezier(-10,80)(-10,80)(-3,73)
\qbezier(10,60)(10,60)(3,67)
\qbezier(-10,60)(-10,60)(-30,40)
\qbezier(-10,20)(-10,20)(-30,40)
\qbezier(-30,20)(-30,20)(-23,27)
\qbezier(-10,40)(-10,40)(-17,33)
\qbezier(-10,20)(-10,20)(-30,0)
\qbezier(-10,0)(-10,0)(-30,20)
\put(-20,10){\circle{6}}
\qbezier(-10,40)(-10,40)(0,30)
\qbezier(-10,0)(-10,0)(0,10)
\qbezier(10,60)(30,30)(30,30)
\qbezier(30,10)(30,10)(20,30)
\qbezier(30,30)(30,30)(27,22)
\qbezier(20,10)(20,10)(23,18)
\qbezier(10,10)(10,10)(0,30)
\qbezier(10,30)(10,30)(7,22)
\qbezier(0,10)(0,10)(3,18)
\put(15,20){\makebox(0,0)[cc]{$\cdots$}}
\put(-15,110){\makebox(0,0)[cc]{\footnotesize $a$}}
\put(-15,70){\makebox(0,0)[cc]{\footnotesize $b$}}
\put(-30,30){\makebox(0,0)[cc]{\footnotesize $d$}}
\put(25,5){\makebox(0,0)[cc]{\footnotesize $c_{1}$}}
\put(5,5){\makebox(0,0)[cc]{\footnotesize $c_{n}$}}
\qbezier(10,120)(40,120)(40,120)
\qbezier(40,120)(50,120)(50,110) 
\qbezier(50,110)(50,110)(50,10) 
\qbezier(50,10)(50,0)(40,0)
\qbezier(40,0)(40,0)(30,10)
\qbezier(-10,120)(-40,120)(-40,120)
\qbezier(-40,120)(-50,120)(-50,110) 
\qbezier(-50,110)(-50,110)(-50,10) 
\qbezier(-50,10)(-50,0)(-40,0)
\qbezier(-40,0)(-40,0)(-30,0)
\put(-50,70){\vector(0,1){5}}
{\thinlines
\multiput(-40,45)(4,0){20}{\line(1,0){2}}
\multiput(-40,-5)(4,0){20}{\line(1,0){2}}
\multiput(-40,45)(0,-4){13}{\line(0,-1){2}}
\multiput(40,45)(0,-4){13}{\line(0,-1){2}}
}
\end{picture}} 
\end{picture}
\caption{Virtual knot $K_{n}$ (left) and its mutant $MK_{n}$ (right).} \label{fig18}
\end{figure}

\begin{table}[ht]
\caption{Calculations for virtual knot $K_{n}$, presented in Fig.~\ref{fig18}.}
{\tiny \begin{tabular}{l|r|r|r|r} \hline 
& \multicolumn{2}{c|}{sign} & \multicolumn{2}{c}{index} \\ \hline
 & $n$-even & $n$-odd & $n$-even & $n$-odd \\ \hline  
$a$ & $1$ & $ 1 $ & $0$ & $2$ \\
$b$ & $1$ & $ 1 $ & $-2$ & $0$ \\
$d$ & $-1$ & $ 1 $ & $-2$ & $-1$ \\
$c_{2i}$ & $-1$ & $ -1 $ & $1$ & $-1$ \\
$c_{2i+1}$ & $-1$ & $ -1 $ & $-1$ & $1$ \\ \hline 
\end{tabular}}
\quad 
{\tiny  
\begin{tabular}{l|r|r|r|r} \hline 
& \multicolumn{2}{c}{$1$-dwrithe} & \multicolumn{2}{c}{$2$-dwrithe} \\ \hline 
 & $n$-even & $n$-odd & $n$-even & $n$-odd \\ \hline  
$D$ & $0$ & $ -2$ & $0$ & $1$ \\
$D_{a}$ & $0$ & $ 0 $ & $0$ & $0$ \\
$D_{b}$ & $0$ & $ 0$ & $0$ & $0$ \\
$D_{d}$ & $0$ & $ 2$ & $0$ & $-1$ \\
$D_{c_{2i}}$ & $-2$ & $ 0$ & $1$ & $0$ \\
$D_{c_{2i+1}}$ & $2$ & $ 0$ & $-1$ & $0$ \\ \hline 
\end{tabular}} \label{tab3}
\end{table}

\begin{table}[ht]
\caption{Calculations for virtual knot $MK_{n}$, presented in Fig.~\ref{fig18}.}
{\tiny  \begin{tabular}{l|r|r|r|r} \hline  
& \multicolumn{2}{c|}{sign} & \multicolumn{2}{c}{index} \\ \hline 
 & $n$-even & $n$-odd & $n$-even & $n$-odd \\ \hline  
$a$ & $1$ & $ 1 $ & $0$ & $2$ \\
$b$ & $1$ & $ 1 $ & $-2$ & $0$ \\
$d$ & $-1$ & $ 1 $ & $-2$ & $-1$ \\
$c_{2i}$ & $-1$ & $ -1 $ & $1$ & $-1$ \\
$c_{2i+1}$ & $-1$ & $ -1 $ & $-1$ & $1$ \\ \hline 
\end{tabular}
}
\quad 
{\tiny 
\begin{tabular}{l|r|r|r|r} \hline 
& \multicolumn{2}{c}{$1$-dwrithe} & \multicolumn{2}{c}{$2$-dwrithe} \\ \hline 
 & $n$-even & $n$-odd & $n$-even & $n$-odd \\ \hline  
$D$ & $0$ & $ -2$ & $0$ & $1$ \\
$D_{a}$ & $0$ & $ 0 $ & $0$ & $0$ \\
$D_{b}$ & $0$ & $ 0$ & $0$ & $0$ \\
$D_{d}$ & $0$ & $ -2$ & $0$ & $1$ \\
$D_{c_{2i}}$ & $2$ & $ 0$ & $-1$ & $0$ \\
$D_{c_{2i+1}}$ & $-2$ & $ 0$ & $1$ & $0$ \\ \hline 
\end{tabular}
}  \label{tab4}
\end{table}

\begin{table}[ht]
\caption{$F$-polynomials of virtual knots $K_n$ and $MK_n$.}
{\small \begin{tabular}{| l |} \hline
for $n$ odd: \\ \hline 
$F_{K_{n}}^{1}(t,\ell) = t^{2}+1+t^{-1}\ell^{2}-\frac{n+1}{2}t-\frac{n-1}{2}t^{-1}-(2-n)\ell^{-2}-\ell^2$ \\ 
$F_{K_{n}}^{2}(t,\ell) = t^{2}+1+t^{-1}\ell^{-1}-\frac{n+1}{2}t-\frac{n-1}{2}t^{-1}-(2-n)\ell-\ell^{-1}$ \\ \hline   
$F_{MK_{n}}^{1}(t,\ell) = t^{2}+1+t^{-1}\ell^{-2}-\frac{n+1}{2}t-\frac{n-1}{2}t^{-1}-(3-n)\ell^{-2}$ \\  
$F_{MK_{n}}^{2}(t,\ell) = t^{2}+1+t^{-1}\ell-\frac{n+1}{2}t-\frac{n-1}{2}t^{-1}-(3-n)\ell$ \\ \hline  
for $n$ even:\\ \hline     
$F_{K_{n}}^{1}(t,\ell) = n-\frac{n}{2}(t^{-1}\ell^2+t\ell^{-2})$\\   
$F_{K_{n}}^{2}(t,\ell) =  n-\frac{n}{2}(t^{-1}\ell^{-1}+t\ell)$ \\ \hline            
$F_{MK_{n}}^{1}(t,\ell)  = n-\frac{n}{2}(t^{-1}\ell^{-2}+t\ell^{2})$\\             
$F_{MK_{n}}^{2}(t,\ell)  = n-\frac{n}{2}(t^{-1}\ell+t\ell^{-1})$\\  \hline         
\end{tabular}} \label{tn}
\end{table}
}
\end{example}

\end{document}